\newcommand{\p}{\mathcal{P}}
\newcommand{\dff}{\draw[fill,color=gray!10]}
\newcommand{\df}{\draw[fill]}
\newcommand{\dfr}{\draw[color=red,thick]}
\newcommand{\dfb}{\draw[fill,color=blue!10]}
\newcommand{\bt}{\begin{tikzpicture}}
\newcommand{\et}{\end{tikzpicture}}
\providecommand{\keywords}[1]
{
  \small	
  \textbf{\textit{Keywords---}} #1
}
\newtheorem{theorem}{Theorem}
\newtheorem{conjecture}[theorem]{Conjecture}
\newtheorem{corollary}[theorem]{Corollary}
\newtheorem{lemma}[theorem]{Lemma}
\newtheorem{definition}[theorem]{Definition}
\tikzset{dot/.style={font=\LARGE}}
\title{The planar Tur\'an number of the seven-cycle}
\author{Ervin Gy\H{o}ri\thanks
  {R\'enyi Institute, Budapest,
 Hungary.  Research partially supported by the NKFIH Grant 132696. E-mail: {\tt gyori.ervin@renyi.hu}}
  \\ Alan Li\thanks{ Amherst College, 220 South Pleasant Street,
Amherst, MA 01002  E-mail:
 {\tt ali24@amherst.edu}}
 \qquad
Runtian Zhou \thanks{
Davidson College, Davidson, North Carolina 28035
Email: {\tt dazhou@davidson.edu}}
}
\begin{document}
\maketitle

\begin{abstract}
The planar Tur\'an number, $ex_\p(n,H)$, is the maximum number of edges in an $n$-vertex planar graph which does not contain $H$ as a subgraph. The topic of extremal planar graphs was initiated by Dowden (2016). He obtained sharp upper bound for both $ex_\p(n,C_4)$ and $ex_\p(n,C_5)$. Later on, D. Ghosh et al. obtained sharp upper bound of $ex_\p(n,C_6)$ and proposed a conjecture on $ex_\p(n,C_k)$ for $7 \leq k\leq 10$. In this paper, we give a sharp upper bound $ex_\p(n,C_7)\leq {18\over 7}n-{48\over 7}$, which satisfies the conjecture of D. Ghosh et al. It turns out that this upper bound is also sharp for $ex_\p(n,\{K_4,C_7\})$, the maximum number of edges in an $n$-vertex planar graph which does not contain $K_4$ or $C_7$ as a subgraph.
\end{abstract}
\keywords{Planar Tur\'an number, Extremal planar graph}
\section{Introduction and Main Results}
In this paper, all graphs considered are planar, undirected, finite and contain neither loops nor multiple edges (unless otherwise stated). More specifically, we study plane graphs which are embeddings (drawings) of graphs in the plane such that the edge curves do not cross each other; they may share just endpoints. We use $C_k$ to denote the cycle of $k$ vertic es and $K_r$ to denote the complete graph of $r$ vertices. We use $n$-face to denote a face with $n$ edges. In particular, we use $(8+)$-face to denote a face with at least $8$ edges.

The Tur\'an number $ex(n,H)$ for a graph $H$ is the maximum number of edges in an $n$-vertex graph with no copy of $H$ as a subgraph. The first result on the topic of Tur\'an numbers was obtained by Tur\'an, who proved that the balanced complete $r$-partite graph is the unique extremal graph of $ex(n,K_{r+1})$ edges. The Erd\H{o}s-Stone-Simonovits theorem \cite{erdos1963structure,erdos1962number} 
then generalizes this result and asymptotically determines $ex(n,H)$ for all nonbipartite graphs $H:$ $ex(n,H)=(1-{1\over \mathcal{X}(H)-1}){n\choose 2}+o(n^2)$.

In 2016, Dowden et al. \cite{dowden2016extremal} initiated the study of Tur\'an-type problems when host graphs are plane graphs, i.e., how many edges can a plane graph on $n$ vertices have, without containing a given graph as a subgraph? The planar Tur\'an number of a graph $H$, $ex_{\mathcal{P}}(n,H)$, is the maximum number of edges in a planar graph on $n$ vertices which does not contain $H$ as a subgraph.  Dowden et al. \cite{dowden2016extremal} obtained the tight bounds  $ex_{\mathcal{P}}(n,C_4) \leq\frac{15(n-2)}{7}$, for all $n\geq 4$ and $ex_{\mathcal{P}}(n,C_5) \leq\frac{12n-33}{5}$, for all $n\geq 11$.  For $k\in \{4,5\}$, let $\Theta_k$ denote the graph obtained from $C_k$ by adding a chord. Y. Lan et al. \cite{lan2019extremal} showed that $ex_\p(n,\Theta_4)\leq {15(n-2)\over 5}$ for all $n\geq 4$, $ex_\p(n,\Theta_5)\leq {5(n-2)\over 2}$ for all $n\geq 5$ and $ex_\p(n,C_6)\leq {18(n-2)\over 7}$. The bounds for $ex_\p(n,\Theta_4)$ and $ex_\p(n,\Theta_5)$ are tight infinitely often, but the upper bound for $ex_\p(n,C_6)$ was improved by D. Ghosh et al. \cite{ghosh2022planar}. They proved $ex_\p(n,C_6)\leq {5n-14\over 2}$ for all $n\geq 18$. In the same paper, they conjectured an upper bound for $ex_\p(n,C_k)$ for each $7\leq k\leq 10$ and large $n$.
\begin{conjecture}
(\cite{ghosh2022planar}) For each $7 \leq k\leq 10$ and sufficiently large $n$, $$ex_\p(n,C_k)\leq 3n-6-{3n+6\over k}.$$
\end{conjecture}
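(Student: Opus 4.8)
The plan is to prove the bound for every $k\ge 7$ by a single discharging argument whose weights depend on $k$, so that the whole conjecture reduces to one local structural statement about how densely triangular faces can occur in a $C_k$-free plane graph. First I would reduce to the case that $G$ is a $2$-connected, edge-maximal $C_k$-free plane graph on $n$ vertices: cut vertices split $G$ into blocks and a short computation shows the bound need only be checked for $2$-connected $G$, while edge-maximality lets me assume every face is bounded by a cycle and that $G$ is ``as triangulated as $C_k$-freeness permits.'' Fix a plane embedding and let $\ell(f)$ denote the length of a face $f$. Writing the deficiency from a triangulation as $D:=(3n-6)-e=\sum_f(\ell(f)-3)$ and using Euler's formula in the form $f=2n-4-D$, the target $e\le 3n-6-\tfrac{3n+6}{k}$ is equivalent to
\[
\sum_f\psi(f)\ \ge\ 24,\qquad \psi(f):=(2k-3)\bigl(\ell(f)-3\bigr)-3 .
\]
Thus triangular faces carry charge $\psi=-3$, while every face of length $\ell\ge 4$ carries a large positive charge $\psi(\ell)\ge 2k-6$.

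\textbf{Discharging.} Initially assign $\psi(f)$ to each face and then discharge from the big faces (length $\ge 4$) to the triangles, so that after redistribution every triangle ends with charge $\ge 0$, every big face retains charge $\ge 0$, and a fixed surplus of at least $24$ remains from the global spherical topology and the largeness of $n$. The natural rule sends charge from each big face across a bounded-width band of triangles that ``belong'' to it; since a $4$-face carries $2k-6$ and each triangle needs $3$, the accounting closes exactly when no big face is asked to support more than about $\tfrac{2(k-3)}{3}$ triangles. All of this is routine bookkeeping except for one input: a guarantee that triangular faces cannot cluster without bound, i.e., that each big face is responsible for only $O(k)$ triangles.

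\textbf{The key structural lemma (the obstacle).} The crux, and the step I expect to be by far the hardest, is the local lemma that a $C_k$-free plane graph contains no large triangulated region. Concretely, consider a maximal region $R$ whose faces are all triangles, bounded by big faces and the outer walk; I would show that once $R$ is too large --- its boundary length or its number of triangles approaching a fixed multiple of $k$ --- it must contain a cycle of length exactly $k$. For triangulated discs with no interior vertices this is the pancyclicity of maximal outerplanar graphs, which already produces a $C_k$ as soon as the boundary has length $\ge k$; the genuine difficulty is extending this to regions with interior vertices and, far more seriously, realizing the \emph{exact} length $k$ rather than merely some long cycle. Hitting $k$ uniformly in $k$ is the main obstacle: short facial triangles can be spliced along shared edges to change a cycle's length in steps of $\pm1$, so I would develop a cycle-length adjustment lemma showing that a sufficiently large triangulated region realizes every cycle length in an interval containing $k$, thereby forcing $C_k$ and contradicting $C_k$-freeness.

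\textbf{Tightness and conclusion.} Finally I would confirm that the discharging weights are optimal by a matching construction, assembled from triangulated gadgets of size $\Theta(k)$ --- each a maximal outerplanar-type piece whose longest cycle is shorter than $k$ --- glued along edges or vertices so that no cycle of length exactly $k$ arises, using a $\tfrac1k$-fraction of non-triangular faces and attaining $3n-6-\tfrac{3n+6}{k}$ up to lower-order terms. The case $k=7$ treated in this paper, where the structural lemma reduces to a finite check, is the base instance of this scheme; the content of the general conjecture is precisely the uniform structural lemma above, after which the discharging and the reductions go through for every $k\ge 7$ and all sufficiently large $n$.
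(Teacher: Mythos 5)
Your proposal sets out to prove the full conjecture for every $k\ge 7$, but this statement is not proved in the paper at all --- it is quoted from Ghosh et al.\ precisely as a \emph{conjecture}, and the paper's introduction records that it was \emph{disproved} for every $k\ge 11$ and sufficiently large $n$ by Cranston et al.\ \cite{cranston2021planar} and, independently, by Lan et al.\ \cite{lan2022improved}. So no uniform argument of the kind you sketch can be repaired, and the failure point is exactly the step you yourself flag as the crux. Your charge reduction is algebraically correct ($\sum_f\psi(f)=2kD-6n+12$ with $D=(3n-6)-e$, so $\sum_f\psi(f)\ge 24$ is equivalent to the conjectured bound), but the ``key structural lemma'' --- that a sufficiently large triangulated region in a $C_k$-free plane graph realizes every cycle length in an interval containing $k$ --- is false once interior vertices are present. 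Pancyclicity is special to maximal outerplanar graphs; triangulated regions with interior vertices can have gaps in their cycle spectrum, and the known counterexamples are built by gluing exactly the kind of $\Theta(k)$-sized triangulated gadgets avoiding $C_k$ that you describe in your ``tightness'' paragraph. In other words, that paragraph describes the mechanism by which the conjectured bound is \emph{beaten} for $k\ge 11$, not matched: large triangulated blocks whose cycle lengths skip $k$ let a big face support far more than $O(k)$ triangles, so no discharging with your weights can close.

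For the one case where the bound is actually true and is proved in this paper, $k=7$ (where $3n-6-\frac{3n+6}{7}=\frac{18}{7}n-\frac{48}{7}$), the paper's route is also quite different from discharging: it partitions $E(G)$ into triangular blocks, uses a replacement lemma (Lemma \ref{replacement}) to substitute $T_3$ blocks for unwanted configurations (edges between two $(8+)$-faces, small block sets on fewer than $7$ vertices, large triangular blocks) while preserving the density inequality, and then closes the count by passing to a bipartite dual graph on $T_3$'s, small block sets and $(8+)$-faces, combined with the per-block inequalities of Lemmas \ref{smallblockslemma} and \ref{smallblocksonly}; Theorem \ref{mt} then removes the $2$-connectivity and degree hypotheses by a block decomposition. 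The role your uniform ``cycle-length adjustment lemma'' would play is filled there by a finite case analysis specific to $C_7$ (e.g.\ Lemmas \ref{8+boundary} and \ref{adjacencylemma}), and the fact that this case analysis does not generalize is not a technical shortcoming but a reflection of the conjecture's falsity for large $k$. A salvageable version of your program would restrict to $k\in\{8,9,10\}$, which the paper leaves open, and would have to replace the pancyclicity-style lemma with structural control that, like the paper's, does not presuppose dense triangulated regions force cycles of the exact target length.
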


Recently, Conjecture 1 was shown to be false for $k\geq 11$ by Cranston et al. \cite{cranston2021planar}, and independently by Lan et al. \cite{lan2022improved} In this paper, we focus on $k=7$ and prove the conjecture in this case. Moreover, we show this upper bound is sharp for infinitely many $n$. We also find a construction which shows the bound is also sharp for $ex_\p(n,\{K_4,C_7\})$.

We denote the vertex and the edge sets of a graph $G$ by $V(G)$ and $E(G)$ respectively. We also denote the number of vertices and edges of $G$ by $v(G)$ and $e(G)$ respectively. The minimum degree of $G$ is denoted $\delta(G)$. The main ingredient of the result is as follows:
\begin{theorem}\label{pt}
Let $G$ be a $2$-connected, $C_7$-free plane graph on $n$ ($n\geq 8$) vertices with $\delta(G)\geq 3$ and with any adjacent vertices having total degree at least $7$. Then $$e(G)\leq \frac{18}{7}n - \frac{48}{7}.$$
\end{theorem}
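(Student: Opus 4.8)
The plan is to argue by the discharging method, converting the edge bound into a statement about face lengths through Euler's formula. Since $G$ is $2$-connected, every face boundary is a cycle; because $G$ is $C_7$-free this immediately rules out any $7$-face. Writing $n_i$ for the number of faces of length $i$, the identities $\sum_i n_i = e - n + 2$ and $\sum_i i\,n_i = 2e$ let me rewrite the desired inequality $e \le \frac{18n-48}{7}$ in the equivalent weighted form
\[
\sum_{i \ge 3} (11i - 36)\, n_i \ge 24 .
\]
Here a triangular face contributes $-3$, a $4$-face $+8$, a $5$-face $+19$, and larger faces more, while $n_7 = 0$. So assigning each face the initial charge $11\ell(f) - 36$, the whole theorem reduces to showing that, after redistributing charge, the total is at least $24$; equivalently, that the surplus stored on the $(4{+})$-faces suffices to pay off the deficit of $3$ carried by every triangular face and still leave a constant amount over.

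The heart of the matter is therefore a structural analysis of how triangular faces can cluster. I would first prove a collection of local lemmas driven by the hypotheses. Two-connectivity and minimum degree $3$ guarantee that faces are genuine cycles with no short degeneracies; the condition that adjacent vertices have total degree at least $7$ forbids two adjacent vertices of degree $3$, which is exactly what prevents repeatedly stacking degree-$3$ vertices into a large triangulated patch. Combined with $C_7$-freeness --- which forbids both $7$-faces and long cycles inside any internally triangulated region --- these constraints force every maximal block of mutually adjacent triangular faces to be small and to be surrounded by $(4{+})$-faces. The model case to rule out is a fan at a vertex $v$: a fan of five consecutive triangles at $v$ already closes up into a $C_7$, so fans are short, and more generally a large internally triangulated disk contains cycles of many lengths, in particular a $C_7$. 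I would catalogue the admissible configurations of triangles together with the lengths of the faces bordering them.

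With the structural lemmas in hand, I would design the discharging rules so that each $(4{+})$-face sends charge across its incident edges --- and, where a triangle is buried among other triangles, along the short chains guaranteed to reach a $(4{+})$-face --- to the nearby triangular faces. The verification then amounts to checking, face by face through the catalogued configurations, that no $(4{+})$-face is asked to give away more than its surplus $11\ell - 36$ and that every triangle ends with charge at least $0$. The leftover additive constant $24$ is recovered from the large faces that remain: a $C_7$-free $2$-connected graph on $n \ge 8$ vertices cannot be a triangulation, so genuine $(4{+})$-faces must exist and retain surplus. I would confirm this, together with the small base cases near $n = 8$, separately.

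I expect the main obstacle to be precisely the structural classification and the attendant case analysis: pinning down every way triangular faces may agglomerate under the joint constraints of $C_7$-freeness and the degree hypotheses, and then tuning the discharging rules so that the charge delivered to each triangle is simultaneously enough everywhere. This is where sharpness bites --- the extremal configuration, which is also the $K_4$-free one, must make every inequality in the discharging tight, so the rules have to be calibrated with no slack, and the bookkeeping for buried triangles and for faces of intermediate length ($5$- and $6$-faces) is where errors are easiest to make.
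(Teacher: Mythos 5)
Your reduction is sound: with Euler's formula and $\sum_i i\,n_i=2e$, the claim $e\le\frac{18n-48}{7}$ is indeed equivalent to $\sum_{i\ge 3}(11i-36)n_i\ge 24$, and $C_7$-freeness of a $2$-connected plane graph does kill $7$-faces. But everything after that reformulation is a plan rather than a proof: the structural catalogue of triangle clusters and the actual discharging rules are exactly the content of the theorem, and they are deferred. The paper does carry out the classification you allude to (its ``triangular blocks'' $B_2,\dots,B_{6,r}$ and $T_3$ are precisely the admissible agglomerations of triangular faces), so that part of your plan is realizable, though it is a long case analysis, not a corollary of the fan observation.

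The concrete gap is in the discharging step itself. Your rule sends charge from a $(4{+})$-face only to the triangles it touches, and you propose to verify locally that no face gives away more than its surplus $11\ell-36$. This fails on the extremal configuration. There each $T_3$ block contains $7$ triangular faces, hence carries total deficit $21$, and charge can enter it only across the $3$ edges of its outer triangle, i.e.\ $7$ units per boundary edge on average; meanwhile each interior $(8{+})$-face of the extremal graph is an $8$-face with surplus $11\cdot 8-36=52$ whose $8$ boundary edges all lie on outer triangles of $T_3$'s, so it would be asked to pay $7\cdot 8=56>52$. Globally the books balance (the total charge in the extremal graph is exactly $24$), but only because longer faces elsewhere have surplus $11\ell-36\ge 7\ell$ once $\ell\ge 9$; so any correct argument must move charge non-locally between faces that are not adjacent, or count globally. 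This is precisely why the paper abandons local discharging in favor of a global device: it replaces bad configurations by $T_3$'s (Lemma \ref{replacement}), forms a bipartite dual graph between $T_3$'s and $(8{+})$-faces, and uses $v(\hat{G^*})\ge\frac{1}{2}e(\hat{G^*})+2$ together with Lemma \ref{8+boundary} to obtain the inequality \eqref{eq1} that your local rules cannot reproduce. Until you either introduce such a global count or design multi-step discharging rules (and prove they terminate with every face nonnegative and total residue $\ge 24$), the proof is not complete.
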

Here is a brief summary about how we will prove this theorem. First we apply Lemma \ref{replacement} to obtain another plane graph $G'$ with some properties and it suffices to prove $G'$ satisfies the inequality of Theorem \ref{pt}. Then, with the trick of transformation into dual graph, we use Lemma \ref{smallblockslemma} and Lemma \ref{smallblocksonly} to obtain $e(G')\leq \frac{18}{7}n - \frac{48}{7}.$

We then use Theorem \ref{pt} in order to establish our desired result, which gives the upper bound of $\frac{18}{7}n - \frac{48}{7}$ for all $C_7$-free plane graphs with at least $60$ vertices.
\begin{theorem}\label{mt}
Let $G$ be a $C_7$-free plane graph on $n$ ($n\geq 60$) vertices. Then $$e(G)\leq \frac{18}{7}n - \frac{48}{7}.$$
\end{theorem}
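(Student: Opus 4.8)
The plan is to deduce Theorem~\ref{mt} from Theorem~\ref{pt} by stripping away, one at a time, the features that obstruct the hypotheses of Theorem~\ref{pt}, while tracking the potential $\Phi(G):=e(G)-\tfrac{18}{7}v(G)+\tfrac{48}{7}$; note that the desired inequality is exactly $\Phi(G)\le 0$. I would argue by contradiction, taking a counterexample $G$ (a $C_7$-free planar graph on $n\ge 60$ vertices with $\Phi(G)>0$) with $n$ as small as possible, and showing it must already satisfy every hypothesis of Theorem~\ref{pt}, which then yields $\Phi(G)\le 0$.

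First I would reduce the minimum degree. If $G$ has a vertex $v$ with $\deg(v)\le 2$, then a direct count gives $\Phi(G)=\Phi(G-v)+\bigl(\deg(v)-\tfrac{18}{7}\bigr)<\Phi(G-v)$, since $\deg(v)\le 2<\tfrac{18}{7}$; as $G-v$ is a smaller $C_7$-free planar graph, minimality forces $\Phi(G-v)\le 0$, so $\Phi(G)<0$, a contradiction. Hence $\delta(G)\ge 3$. Next I would remove the total-degree obstruction: if some edge $uv$ has $\deg(u)+\deg(v)\le 6$, then both endpoints have degree exactly $3$, deleting $\{u,v\}$ removes exactly five edges, and $\Phi(G)=\Phi(G-u-v)+\bigl(5-\tfrac{36}{7}\bigr)=\Phi(G-u-v)-\tfrac{1}{7}<\Phi(G-u-v)\le 0$, again a contradiction. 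So every adjacent pair has total degree at least $7$.

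It remains to force $2$-connectivity. If $G$ is disconnected or has a cut vertex, I would split it into overlapping pieces $G_1,G_2$ (sharing a single cut vertex, or sharing nothing in the disconnected case) and use the additivity $\Phi(G)=\Phi(G_1)+\Phi(G_2)-\tfrac{30}{7}$ at a cut vertex, respectively $\Phi(G)=\Phi(G_1)+\Phi(G_2)-\tfrac{48}{7}$ for components. Each separation thus contributes a fixed negative amount, which must absorb the positive excess that small dense blocks can carry: a $C_7$-free block on $m\le 7$ vertices satisfies $e\le 3m-6$ and hence $\Phi\le\tfrac{3m+6}{7}$. Once $G$ is $2$-connected with $\delta(G)\ge 3$, every adjacent pair of total degree at least $7$, and $n\ge 8$ (automatic from $n\ge 60$), Theorem~\ref{pt} gives $\Phi(G)\le 0$, the final contradiction.

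The main obstacle is the connectivity step together with the sub-threshold base cases. The per-block bound genuinely fails for small dense graphs (for instance $\Phi(K_4)=\tfrac{18}{7}>0$ and the octahedron has $\Phi=\tfrac{24}{7}>0$), so I cannot simply sum per-block estimates; instead I must show that the slack of $\tfrac{30}{7}$ (or $\tfrac{48}{7}$) gained at each separation always dominates the total excess of the finitely many possible small dense blocks, and that this domination holds uniformly once $n\ge 60$. This forces a careful accounting of exactly which small blocks can occur and of how the vertex-deleting reductions behave when they drop the count below the threshold, and it is precisely this bookkeeping that pins down the value $60$.
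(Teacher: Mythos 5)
Your outline matches the paper's strategy (reduce to minimum degree $3$ and edge-degree-sum $\ge 7$, decompose into blocks, apply Theorem~\ref{pt} to large blocks, and control the small blocks), but as written it has two genuine gaps. First, the minimal-counterexample framing does not bottom out. When you delete a vertex of degree at most $2$ (or an edge with degree sum at most $6$) from a counterexample on exactly $60$ vertices, the resulting graph has fewer than $60$ vertices, so ``minimality forces $\Phi(G-v)\le 0$'' is not available -- the statement is only claimed for $n\ge 60$, and it is genuinely false for small graphs (you note $\Phi(K_4)>0$ yourself). Strengthening the induction to all $n$ is therefore impossible, and restricting it to $n\ge 60$ breaks the very first reduction step. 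The paper avoids induction on the theorem entirely: it tracks the quantity $18n-7e$ additively through the deletions, recording that each deleted vertex contributes at least $\tfrac12$ to it, and carries the term $\tfrac12(n-n')$ into the final inequality, which is evaluated against the \emph{original} $n$.

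Second, the decisive bookkeeping is exactly the part you leave undone. As you observe, the slack of $\tfrac{30}{7}$ per cut vertex does not by itself absorb the excess of the last remaining small block (a single $7$-vertex block can still carry positive $\Phi$ under the crude bound $e\le 3m-6$), so the argument as proposed does not close. The paper's resolution is quantitative: it proves specific edge bounds for $2$-connected $C_7$-free blocks on $m\le 7$ vertices (most importantly $e\le 14$ when $m=7$, because a $7$-vertex planar triangulation contains a $C_7$), converts these into the per-block lower bounds $18n_i-7e_i-18\ge 30,10,6,9,12,15,11$ for $n_i\ge 8,7,6,5,4,3,2$, and then observes that each of these exceeds $\tfrac{n_i}{2}$. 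Summing over the block--cut-vertex tree gives $18n-7e\ge 18+\tfrac12 n$ when no block has $8$ or more vertices, and this is where the threshold $n\ge 60$ comes from ($18+30=48$). Without carrying out this enumeration of small-block edge counts and the $\tfrac{n_i}{2}$ comparison, the proof is not complete; with it, your potential $\Phi$ is just a rescaling of the paper's $18n-7e-48$ and the two arguments coincide.
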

\begin{corollary}\label{gt}
Let $G$ be a $C_7$-free plane graph on $n$ vertices such that $G$ contains no maximal $2$-connected subgraph of more than $7$ vertices. Then $$e(G)\leq {5\over 2}n - \frac{18}{7}.$$
\end{corollary}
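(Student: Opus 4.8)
The plan is to decompose $G$ into its blocks (the maximal $2$-connected subgraphs together with the bridges) and bound the edges of each block separately, then reassemble using the block--cut tree. Write $B_1,\dots,B_k$ for the blocks of a connected graph $G$ and put $b_i=v(B_i)$. Since every edge lies in exactly one block, $e(G)=\sum_i e(B_i)$, and since the block--cut tree is a tree we have the identity $\sum_i (b_i-1)=n-1$ for connected $G$ (and $\sum_i (b_i-1)=n-c$ for a graph with $c$ components). The hypothesis guarantees $b_i\le 7$ for every $i$, and each $B_i$ is $C_7$-free and planar.

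The heart of the argument is the per-block estimate
\[
  e(B_i)\le \tfrac52\, b_i-3 \qquad (2\le b_i\le 7).
\]
For a bridge ($b_i=2$) this reads $1\le 2$. For $3\le b_i\le 6$ it follows from Euler's formula, since $e(B_i)\le 3b_i-6\le \tfrac52 b_i-3$, the last inequality being equivalent to $b_i\le 6$ and tight for the octahedron at $b_i=6$. The only case where Euler's bound is not enough is $b_i=7$, where $3b_i-6=15$ but we need $e(B_i)\le 14$; here the $C_7$-freeness must be used. I claim that a $C_7$-free planar graph on $7$ vertices has at most $14$ edges.

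To prove the claim it suffices to show that every $7$-vertex planar triangulation (the only $7$-vertex planar graphs attaining $15$ edges) contains a spanning $7$-cycle, i.e. is Hamiltonian; then a $C_7$-free graph on $7$ vertices cannot be a triangulation and must miss at least one edge. This is the step I expect to be the main obstacle. I would handle it either by invoking Whitney's theorem (every $4$-connected planar triangulation is Hamiltonian) together with a short case analysis on the separating triangle that a non-$4$-connected $7$-vertex triangulation must contain, reducing a spanning cycle to the two smaller triangulations it separates, or by citing that the smallest non-Hamiltonian planar triangulation (the Goldner--Harary graph) has $11$ vertices, so all triangulations on at most $10$ vertices are Hamiltonian. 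Either route yields $e(B_i)\le 14\le \tfrac52\cdot 7-3$.

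Finally I assemble the pieces. For connected $G$ with $k\ge 1$ blocks, using $\sum_i b_i=\sum_i (b_i-1)+k=(n-1)+k$ and $k\ge 1$,
\[
  e(G)=\sum_i e(B_i)\le \sum_i\Bigl(\tfrac52 b_i-3\Bigr)
      =\tfrac52\sum_i b_i-3k=\tfrac52\bigl((n-1)+k\bigr)-3k
      =\tfrac52 n-\tfrac52-\tfrac12 k\le \tfrac52 n-3.
\]
Since $3>\tfrac{18}{7}$, this already gives the stated bound $e(G)\le \tfrac52 n-\tfrac{18}{7}$, with room to spare. For disconnected $G$ I would apply the connected estimate to each non-trivial component (each contributing its own $-3$) and observe that isolated vertices only relax the inequality, so the same conclusion holds. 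The slack in the constant is expected: the extremal small-block graphs are chains of octahedra, which give edge ratio $\tfrac{12}{5}<\tfrac52$, so the clean value $\tfrac{18}{7}$ is presumably chosen to dovetail with Theorem~\ref{mt} (the two bounds agree at $n=60$) rather than to be tight.
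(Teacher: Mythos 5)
Your proposal is correct, and it reaches the stated bound by a genuinely different and more self-contained route than the paper. The paper extracts Corollary \ref{gt} as a byproduct of its proof of Theorem \ref{mt}: it first deletes low-degree vertices and low-degree-sum edges while tracking the quantity $18n-7e$, then decomposes the residual graph into blocks and plugs in per-block lower bounds on $18n_i-7e_i-18$ read off from the extremal configurations $B_2, B_3, B_{4,b}, B_{5,d}, T_3$ and the $n=7$ case, finally observing that when $b_8=0$ the resulting inequality $18n-7e\geq 18+\tfrac12 n$ is exactly $e\leq \tfrac52 n-\tfrac{18}{7}$. You skip the deletion preprocessing entirely, work with the block decomposition of $G$ itself, and use the cleaner per-block estimate $e(B_i)\leq \tfrac52 b_i-3$ (Euler's formula for $b_i\leq 6$, $C_7$-freeness only for $b_i=7$) together with the block-tree identity $\sum_i(b_i-1)=n-1$; this yields the stronger $e\leq \tfrac52 n-3$ for connected $G$, which makes transparent that the paper's constant $\tfrac{18}{7}$ is an artifact of its $18n-7e$ bookkeeping rather than anything tight. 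Both arguments hinge on the same single nontrivial fact, that a $7$-vertex planar graph with $15$ edges is a triangulation and every such triangulation is Hamiltonian; the paper asserts this without justification ("any triangulated planar graph contains a $C_7$"), whereas you correctly isolate it as the one real obstacle and offer two standard ways to discharge it (Whitney's theorem plus a separating-triangle analysis, or the fact that the smallest non-Hamiltonian maximal planar graph, the Goldner--Harary graph, has $11$ vertices), so your treatment is if anything more complete on this point. The only blemish, shared with the paper's own version, is the degenerate case of a graph with no edges on very few vertices (for $n=1$ the right-hand side is negative), which is clearly outside the intended scope of the statement.
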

This corollary follows from the proof of Theorem \ref{mt}.
\begin{corollary}\label{kc}
Let $G$ be a $K_4,C_7$-free plane graph on $n$ ($n\geq 60$) vertices. Then $$e(G)\leq \frac{18}{7}n - \frac{48}{7}.$$
\end{corollary}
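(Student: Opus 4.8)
The plan is to derive Corollary \ref{kc} from Theorem \ref{mt} by showing that, for the purpose of the edge bound, a $K_4$-free planar graph can be assumed to satisfy the hypotheses of Theorem \ref{pt} (or to be handled by the same decomposition used to prove Theorem \ref{mt}). First I would observe that the bound $\frac{18}{7}n-\frac{48}{7}$ is exactly the bound of Theorem \ref{mt}, so at first glance Corollary \ref{kc} looks like an immediate consequence of Theorem \ref{mt} --- every $\{K_4,C_7\}$-free graph is in particular $C_7$-free. The point of stating it separately must therefore be to assert that the \emph{same} upper bound is sharp under the extra $K_4$-free restriction (this is the sharpness claim advertised in the abstract), and that the bound is attained by an explicit $K_4$-free construction. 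So I would split the work into the trivial upper-bound direction and the substantive sharpness direction.

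For the upper bound, I would simply invoke Theorem \ref{mt}: since $n\ge 60$ and a $\{K_4,C_7\}$-free graph is $C_7$-free, we immediately get $e(G)\le\frac{18}{7}n-\frac{48}{7}$, with no further argument needed. The real content is the matching lower bound (sharpness for infinitely many $n$), and here I would exhibit a family of $K_4$-free planar graphs achieving (or asymptotically achieving, and exactly at suitable residues of $n$) the value $\frac{18}{7}n-\frac{48}{7}$. The natural candidate, given the $\frac{18}{7}$ coefficient, is a graph whose faces are all heptagons or whose block structure is built from a fixed small $C_7$-free, $K_4$-free gadget glued in a tree-like pattern; I would compute its edge count via Euler's formula $e = v + f - 2$ together with the face-size relation $2e=\sum_i \ell_i f_i$ to confirm the ratio $\frac{e}{n}\to\frac{18}{7}$. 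I would then verify the two nontrivial properties of the gadget directly: that no $C_7$ is created (checking cycle lengths through the gluing vertices) and that no $K_4$ appears (checking that the construction, unlike the generic $C_7$-extremal example which may contain $K_4$'s, avoids triangular $K_4$ configurations).

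The main obstacle I anticipate is producing a single construction that is simultaneously $K_4$-free, $C_7$-free, planar, and of edge density exactly $\frac{18}{7}$; the $C_7$-extremal constructions used for sharpness of Theorem \ref{mt} may well contain copies of $K_4$, so one cannot merely reuse them, and the delicate part is modifying the extremal gadget to destroy every $K_4$ without lowering the edge count below the target. I would handle the additive constant $-\frac{48}{7}$ and the congruence condition on $n$ by controlling exactly how many copies of the gadget are glued and along which shared vertices, so that the boundary contributions match the constant. I would finish by checking the construction against the hypotheses that make Theorem \ref{pt} tight (minimum degree $3$, adjacent vertices of total degree at least $7$), thereby confirming that the extremal examples live inside the class covered by our main theorem and that the $\{K_4,C_7\}$-free restriction genuinely does not reduce the extremal number.
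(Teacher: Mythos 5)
Your upper-bound argument is exactly the paper's (implicit) proof of Corollary \ref{kc}: a $\{K_4,C_7\}$-free planar graph is in particular $C_7$-free, so the bound follows immediately from Theorem \ref{mt}, and nothing more is needed. The remainder of your proposal --- constructing a $K_4$-free, $C_7$-free extremal family attaining the bound --- addresses the sharpness claim, which in the paper is the separate statement Theorem \ref{ct} (proved by the explicit modification of the Ghosh et al.\ construction in Section 2), not part of Corollary \ref{kc} itself.
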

We show that, for infinitely many large graphs, Corollary \ref{kc}, and therefore Theorem \ref{mt}, is tight:
\begin{theorem}\label{ct}
For every $n\geq 110, n\cong 26\pmod{42}$, there exists a $K_4,C_7$-free plane graph $G$ with $e(G)=\frac{18}{7}n - \frac{48}{7}.$
\end{theorem}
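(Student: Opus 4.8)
The plan is to prove Theorem~\ref{ct} by an explicit construction, since it is an existence statement whose purpose is to match the upper bound of Corollary~\ref{kc}. Writing $n = 42m + 26$ (so that $n \geq 110$ corresponds to $m \geq 2$, and every such $n$ satisfies $n \equiv 26 \bmod 42$), the target edge count becomes an integer,
\[
\frac{18}{7}n-\frac{48}{7}=\frac{18(42m+26)-48}{7}=108m+60 .
\]
Thus it suffices to produce, for each $m\geq 2$, a planar graph on $42m+26$ vertices with exactly $108m+60$ edges containing neither $K_4$ nor $C_7$. The arithmetic already dictates the shape of the construction: each increment of $m$ must contribute a repeating block of $42$ vertices and $108$ edges, i.e.\ a block of density exactly $18/7$, while a fixed ``frame'' absorbs the residual $26$ vertices and the constant $60$ (equivalently the offset $-\tfrac{48}{7}$). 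The role of the congruence $n\equiv 26\bmod 42$ is precisely to let the repeating block close up and the frame fit.

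To design the repeating block I would first read off the equality conditions in the proof of Theorem~\ref{pt}: the extremal graph must make every inequality in that discharging/face-counting argument tight, and this pins down the degree sequence and the multiset of face lengths the block should realize (predominantly triangles together with a controlled number of larger faces, the per-period average face length being $36/11$). Concretely I would assemble $G$ as a $2$-connected, periodically embedded planar graph built from small, dense, $K_4$-free triangulated pieces attached in a periodic pattern of non-triangular faces, with two fixed caps supplying the extra $26$ vertices. Planarity is then automatic, since the embedding is produced by the construction, and the vertex and edge totals are a direct count that I would check reduces to $42m+26$ and $108m+60$.

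Verifying that $G$ is $K_4$-free should be the routine, purely local part: I would take each triangulated piece to be intrinsically $K_4$-free (for instance octahedra, which have clique number $3$ and, having only six vertices, cannot even contain a $7$-cycle), and then confirm that the interfaces between pieces and frame create no new triangle that could complete a $K_4$. Because $K_4$-freeness is inherited from finitely many local configurations, this amounts to inspecting the finite list of piece–interface types.

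The crux, and the step I expect to be the main obstacle, is proving that $G$ contains no $C_7$. The genuine difficulty is the tension between the high density $18/7$, which forces many triangles and hence many short cycles, and the requirement that \emph{no} cycle have length exactly seven; a naive dense triangulated region immediately produces a $7$-cycle, so the periodic structure must be arranged very deliberately. I would organize the argument by classifying every cycle of $G$ according to how it interacts with the periodic (non-triangular) structure: a cycle confined to a single small triangulated piece has length bounded well below $7$, while any cycle that wraps through the periodic frame must traverse a long arc and so has length at least $8$. The delicate cases are the cycles that partly enter one or more pieces and partly run along the frame, and excluding length exactly $7$ for these requires an essentially exhaustive analysis of the possible entry/exit patterns at each interface, using the exact tuned parameters (piece sizes and face lengths, chosen modulo $42$). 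Once this classification rules out $C_7$, the four facts---planarity, the edge count $108m+60$, $K_4$-freeness, and $C_7$-freeness---give the theorem, and the matching upper bound of Corollary~\ref{kc} shows the bound is sharp for all admissible $n$.
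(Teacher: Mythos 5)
Your numerology is right ($n=42m+26$ gives the integer target $108m+60$, so each period must contribute $42$ vertices and $108$ edges), and your guess for the building block is in fact exactly what the paper uses: the gadget $T_3$ of Figure \ref{t3} is the octahedron, a $6$-vertex, $12$-edge, $K_4$-free piece too small to contain a $C_7$. The paper's construction takes a base graph $G_0$ (two rows of $k$ hexagons plus one extra vertex, drawn so that \emph{every} face of $G_0$ has length $8$ and every vertex has degree $2$ or $3$), subdivides each edge of $G_0$ once, and replaces each vertex of $G_0$ by an octahedron glued to the subdivision vertices of its incident edges; the counts $v(G)=e(G_0)+3(6k-2)+4(3k+7)=42k+26$ and $e(G)=12\,v(G_0)=108k+60$ then follow. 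So in spirit your plan and the paper's proof coincide.

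The problem is that your proposal never actually produces a graph. For an existence theorem proved by construction, the construction \emph{is} the proof: ``I would assemble $G$ from small dense pieces attached in a periodic pattern with two fixed caps'' specifies nothing checkable, and the step you yourself identify as the crux --- showing that no cycle has length exactly $7$ --- is deferred to ``an essentially exhaustive analysis of the possible entry/exit patterns'' that is never carried out. This is not a minor omission: the whole difficulty of the theorem is reconciling density $18/7$ with $C_7$-freeness, and the paper resolves it by a specific choice (all non-triangular faces of $G_0$, hence of $G$, have length $8$, and distinct octahedra are separated by subdivision vertices, so a cycle leaving one octahedron must travel along an $8$-face and cannot close up in length $7$). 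Without naming the base graph, the gluing rule, and the resulting face lengths, there is no object about which $C_7$-freeness could even be verified, so the argument as written does not establish the statement.
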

\begin{corollary}
For every $n\geq 110,n\cong 26\pmod{42}$, there exists a $C_7$-free plane graph $G$ with $e(G)=\frac{18}{7}n - \frac{48}{7}.$
\end{corollary}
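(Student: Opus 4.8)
The plan is to derive this corollary directly from Theorem \ref{ct}, using the elementary observation that the class of $\{K_4,C_7\}$-free planar graphs is contained in the class of $C_7$-free planar graphs. First I would invoke Theorem \ref{ct}: for every $n\geq 110$ with $n\cong 26 \pmod{42}$, it supplies a planar graph $G$ that is simultaneously $K_4$-free and $C_7$-free and satisfies $e(G)=\frac{18}{7}n-\frac{48}{7}$. Discarding the $K_4$-freeness, this same graph $G$ is already a $C_7$-free planar graph on $n$ vertices with exactly $\frac{18}{7}n-\frac{48}{7}$ edges, which is precisely the existence statement the corollary asserts. No modification of the construction is needed.

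The remaining content is interpretive rather than computational: combined with Theorem \ref{mt}, which establishes $e(G)\leq \frac{18}{7}n-\frac{48}{7}$ for every $C_7$-free planar graph on $n\geq 60$ vertices, the construction shows that this upper bound cannot be improved. Hence for the residue class $n\cong 26\pmod{42}$ with $n\geq 110$ the planar Tur\'an number is pinned down exactly at $\frac{18}{7}n-\frac{48}{7}$, and the same extremal graph serves as a witness for both the $\{K_4,C_7\}$-free problem of Corollary \ref{kc} and the $C_7$-free problem here.

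I do not expect any genuine obstacle at this stage; all of the difficulty is absorbed into Theorem \ref{ct}, where one must exhibit an explicit planar graph that attains the extremal edge count while avoiding \emph{both} $K_4$ and $C_7$ — a strictly stronger demand than avoiding $C_7$ alone. Once that construction is secured, relaxing the forbidden family from $\{K_4,C_7\}$ to $\{C_7\}$ only enlarges the admissible class, so the witness transfers without any further work.
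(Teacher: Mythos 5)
Your proposal is correct and matches the paper's (implicit) argument exactly: the corollary is stated immediately after Theorem \ref{ct} precisely because any $\{K_4,C_7\}$-free planar graph is a fortiori $C_7$-free, so the construction of Theorem \ref{ct} serves as the witness with no further work. Nothing is missing.
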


\section{Proof of Theorem \ref{ct}: Extremal Graph Construction}
This construction is a modification of the construction of D. Ghosh et al. \cite{ghosh2022planar}. First we show that for each $k\geq 2$, there is a plane graph $G_0$ which contains $2$ rows by $k$ columns of ``hexagons" and an additional vertex, where each face of $G_0$ has length $8$ and each vertex of $G_0$ has degree $2$ or $3$ (Figure \ref{f1}). Then, from $G_0$ we can construct $G$, where $G$ is a $K_4,C_7$-free plane graph with $v(G)=42k+26$ and $e(G)=108k+60$, so $e(G)=\frac{18}{7}v(G) - \frac{48}{7}$.

Given $G_0$, we make local changes to each vertex of $G_0$, as shown in Figure \ref{cs}. Specifically, if $v\in v(G_0)$ has degree $3$, then $v_1,v_2,v_3$ are the middle points of the three edges incident to $v$ in $G_0$, and $v_4,v_5,v_6$ are three vertices added in replacement of $v$. If $v\in v(G_0)$ has degree $2$, then $v_1,v_3$ are the middle points of the two edges incident to $v$ in $G_0$, and $v_2,v_4,v_5,v_6$ are four vertices added in replacement of $v$. Hence each edge of $G_0$ become a vertex in $G$ (the middle point of that edge), each degree $3$ vertex of $G_0$ is replaced with $3$ vertices, and each degree $2$ vertex of $G_0$ is replaced with $4$ vertices. Also, as shown in Figure \ref{cs}, each vertex of $G_0$ contributes $12$ edges in $G$. Observe that in $G_0$, there are $3k+7$ vertices with degree $2$ and $6k-2$ vertices with degree $3$, and $e(G_0)=12k+4$. Then, it is easy to verify $G$ is a $K_4,C_7$-free graph with
\begin{align*}
v(G)&=e(G_0)+3(6k-2)+4(3k+7)=42k+26,\\
e(G)&=12v(G_0)=12((6k-2)+(3k+7))=108k+60,\\
\Rightarrow e(G)&={18\over 7}v(G)-{48\over 7}.
\end{align*}

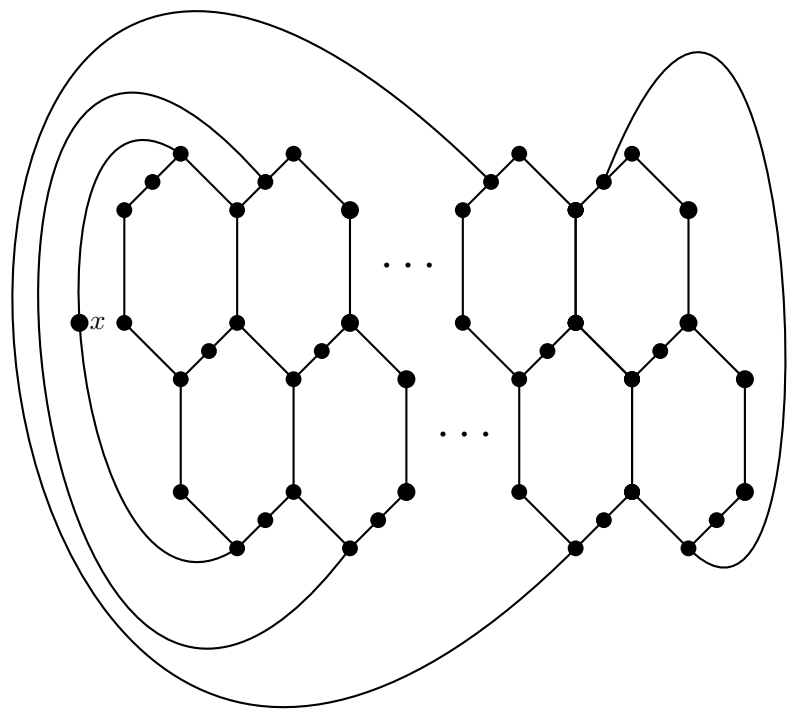
\begin{figure}

\begin{tikzpicture}[style=thick,scale=1.5]
\draw (2,1)--(2,0)--(2.5,-0.5)--(2.5,-1.5);
\foreach \x in {-1.5,-0.5,1.5,2.5}
{
	\draw (\x,1.5)--  ++(-0.5,-0.5)-- ++(0,-1)-- ++(0.5,-0.5)-- ++(0,-1)-- ++(0.5,-0.5)-- ++(0.5,0.5);
	\draw(\x,1.5)--(\x+0.5,1);
	\draw(\x+0.5,0)--(\x,-0.5);
	\fill (\x-0.25,1.25)circle(2pt);
	\fill (\x+0.25,-0.25)circle(2pt);
	\fill (\x+0.75,-1.75)circle(2pt);
    \fill (\x,-1.5)circle(2pt);
    \fill (\x,1.5)circle(2pt);
    \fill (\x,-0.5)circle(2pt);
    \fill (\x-0.5,1)circle(2pt);
    \fill (\x-0.5,0)circle(2pt);
    \fill (\x+0.5,-2)circle(2pt);
    \fill (2,1)circle(2pt);
    \fill (2,0)circle(2pt);
    \fill (2.5,-0.5)circle(2pt);
    \fill (2.5,-1.5)circle(2pt);
}

\node[dot] at (0.55,0.5) {$\cdots$};
\node[dot] at (1.05,-1) {$\cdots$};
\draw[fill] (0,1)circle(2pt)--(0,0)circle(2pt)--(0.5,-0.5)circle(2pt)--(0.5,-1.5)circle(2pt);
\draw[fill] (3,1)circle(2pt)--(3,0)circle(2pt)--(3.5,-0.5)circle(2pt)--(3.5,-1.5)circle(2pt);
\path [draw,name path=add line][black] (-1.5,1.5)..controls(-3,2.5)and(-2.5,-3)..(-1,-2);
\path [name path=ver line](-1.75,0)--(-4,0);
\draw [name intersections={of=add line and ver line, by=x}][thick,fill=black]  (x) circle(2pt);
\node[right] at (x){$x$};
\draw (-0.75,1.25)..controls(-4,5)and(-3,-6)..(0,-2);
\draw (2.25,1.25)..controls(3.9,5.5)and(4.5,-3.5)..(3,-2);
\draw (1.25,1.25)..controls(-5,7.5)and(-4,-8)..(2,-2);

\end{tikzpicture}
\caption{\label{f1}Graph $G_1$}
\end{figure}
\begin{figure}
\centering
\begin{tikzpicture}
\draw[fill,xshift=-4cm](90:0)node[anchor=-30]{$v$}circle(2pt)--(90:1.8)circle(2pt);
\draw[fill,xshift=-4cm](90:0)--(-30:1.8)circle(2pt);
\draw[fill,xshift=-4cm](90:0)--(-150:1.8)circle(2pt);
\draw[fill,xshift=4cm,dashed](90:1.2)circle(2pt)node[right]{$v_1$}--(90:1.8)circle(2pt);
\draw[fill,xshift=4cm,dashed](-30:1.2)circle(2pt)node[below]{$v_2$}--(-30:1.8)circle(2pt);
\draw[fill,xshift=4cm,dashed](-150:1.2)circle(2pt)node[below]{$v_3$}--(-150:1.8)circle(2pt);
\draw[xshift=4cm](90:1.2)--(-30:1.2)--(-150:1.2)--cycle;
\draw[fill,xshift=4cm](90:1.2)--(30:0.3)node[anchor=-150]{$v_6$}circle(2pt);
\draw[fill,xshift=4cm](90:1.2)--(150:0.3)node[anchor=-30]{$v_5$}circle(2pt);
\draw[fill,xshift=4cm](-30:1.2)--(30:0.3)circle(2pt);
\draw[fill,xshift=4cm](-30:1.2)--(-90:0.3)circle(2pt);
\draw[fill,xshift=4cm](-150:1.2)--(-90:0.3)node[anchor=90]{$v_4$}circle(2pt);
\draw[fill,xshift=4cm](-150:1.2)--(150:0.3)circle(2pt);
\draw[xshift=4cm](-90:0.3)--(150:0.3)--(30:0.3)--cycle;
\draw[ultra thick,dotted,->](-1cm,0.5cm)--(1cm,0.5cm);
\end{tikzpicture}
\begin{tikzpicture}
\draw[fill,xshift=-4cm](90:0)node[anchor=-30]{$v$}circle(2pt)--(90:1.8)circle(2pt);

\draw[fill,xshift=-4cm](90:0)--(-150:1.8)circle(2pt);
\draw[fill,xshift=4cm,dashed](90:1.2)circle(2pt)node[right]{$v_1$}--(90:1.8)circle(2pt);
\draw[fill,xshift=4cm](-30:1.2)circle(2pt)node[below]{$v_2$};
\draw[fill,xshift=4cm,dashed](-150:1.2)circle(2pt)node[below]{$v_3$}--(-150:1.8)circle(2pt);
\draw[xshift=4cm](90:1.2)--(-30:1.2)--(-150:1.2)--cycle;
\draw[fill,xshift=4cm](90:1.2)--(30:0.3)node[anchor=-150]{$v_6$}circle(2pt);
\draw[fill,xshift=4cm](90:1.2)--(150:0.3)node[anchor=-30]{$v_5$}circle(2pt);
\draw[fill,xshift=4cm](-30:1.2)--(30:0.3)circle(2pt);
\draw[fill,xshift=4cm](-30:1.2)--(-90:0.3)circle(2pt);
\draw[fill,xshift=4cm](-150:1.2)--(-90:0.3)node[anchor=90]{$v_4$}circle(2pt);
\draw[fill,xshift=4cm](-150:1.2)--(150:0.3)circle(2pt);
\draw[xshift=4cm](-90:0.3)--(150:0.3)--(30:0.3)--cycle;
\draw[ultra thick,dotted,->](-1cm,0.5cm)--(1cm,0.5cm);
\end{tikzpicture}
\caption{\label{cs}Local change from $G_0$ to $G$}
\end{figure}
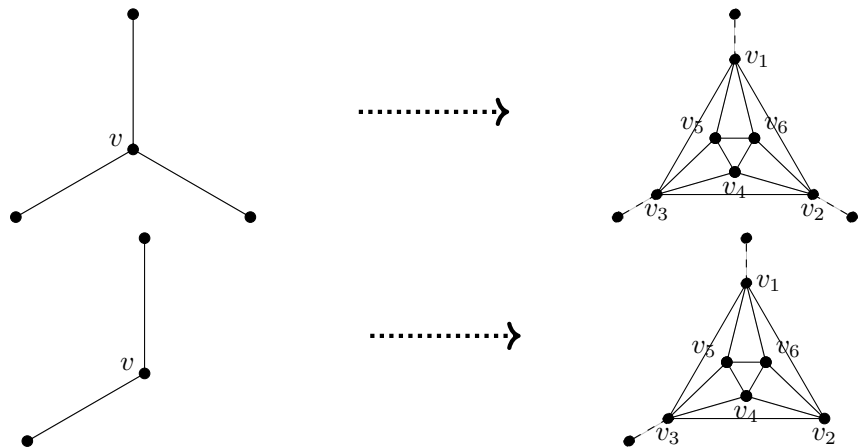
\section{Definitions and Preliminaries}

\begin{definition}\label{tb}
Let $G$ be a plane graph. Then a \textbf{triangular block} $(E, F)$ consists of a set of edges $E \in E(G)$ and a set of faces $F \in F(G)$ built as follows: \\
1) Begin with an edge $e \in E(G)$. If it is not in any $3$-face of $G$, then we have $E = \{e\}$ and $F = \emptyset$; \\
2) Otherwise, we add $e$ into $E$, and for each edge $e' \in E$, we add all $3$-faces containing $e'$ into $F$, and all edges in such faces into $E$; \\
3) Repeat step 2) until no more faces can be added into $F$.

\end{definition}
Note that unlike the definition in \cite{ghosh2022planar}, we account for faces in our triangular blocks as well. This is because there are many situations where two different triangular blocks can have the same edge set but different face sets, and this makes a big difference in our calculations. \\

Similar to \cite{ghosh2022planar}, we make the following observations: \\
(i) Given a triangular block $B$, no matter which edge we begin with, we always obtain $B$ as the triangular block. \\
(ii) The triangular blocks form a partition of the edges in $G$.
\begin{definition}
Let $G$ be a $2$-connected plane graph and $e\in E(G)$. Let the two faces incident to $e$ have length $l_1$ and $l_2$. The \textbf{contribution of $\mathbf{e}$} to the face number of $G$, denoted by $f^*(e)$, is defined as $$f^*(e)=\max ({1\over l_1},{1\over 8})+\max ({1\over l_2},{1\over 8}).$$
\end{definition}

\begin{definition}
Let $G$ be a plane graph, and let $B=(E,F)$ be a triangular block of $G$. An edge in $E$ is called an \textbf{exterior edge} of $B$ if it borders a non-triangular face of $G$. All other edges of $E$ are called \textbf{interior edges}. The set of all exterior edges of $E$ is called the \textbf{boundary} of $B$. A non-triangular face in $G$ that borders an exterior edge is called an \textbf{exterior face} of $B$.
Now if $G$ is a $2$-connected plane graph, we denote
\begin{align*}
e_B^\partial&=\text{number of edges of $B$ adjacent to some $(8+)$-face}\\
e_B^{int}&=\text{number of edges of $B$ that are not adjacent to any $(8+)$-face}\\
f_B&=\sum_{e\in E}f^*(e).
\end{align*}

\end{definition}

\begin{definition}\label{ltb}
\textbf{Large triangular blocks} are triangular blocks with a boundary that forms a single cycle, and such that, if the boundary does not form an outerface, then it can only border $(8+)$-faces, otherwise a $C_7$ will form. Triangular blocks that can border faces of length less than $7$, or have boundary that does not form a single cycle, are called \textbf{small triangular blocks}. Given a plane graph $G$, denote $S(G)$ as the set of its small triangular blocks.
\end{definition}

Note that in the definition above we mention the boundary because there can be many triangular blocks with boundaries that do not form a single cycle.

\begin{definition}\label{sbs}
Let $G$ be a plane graph. Then a \textbf{small block set} is a set of small triangular blocks given by the following algorithm: \\
1) Start with any small triangular block as the set; \\
2) For each exterior face of triangular blocks in the set, if the face is of length less than $7$, then we add all triangular blocks adjacent to that face into the set; \\
3) Repeat step 2) until the set cannot be further enlarged.

Similar to triangular blocks, small block sets have the following property: \\
(i) Given a small block set $S$, no matter which small triangular block we begin with, we always obtain $S$ as the small block set. \\
(ii) The small block sets and large triangular blocks form a partition of the edges in $G$.
\end{definition}

\begin{definition}
Denote $T_3$ as the triangular block shown in Figure \ref{t3}. For any figures of triangular blocks in this paper, we use gray to refer to faces that belong to the triangular block.

\begin{figure}
\centering
\begin{tikzpicture}
\dff (90:2)--(-30:2)--(-150:2)--cycle;
\draw[fill](90:2)circle(2pt)node[right]{$v_1$};
\draw[fill](-30:2)circle(2pt)node[below]{$v_2$};
\draw[fill](-150:2)circle(2pt)node[below]{$v_3$};
\draw (90:2)--(-30:2)--(-150:2)--cycle;
\draw[fill](90:2)--(30:0.5)node[anchor=-150]{$v_6$}circle(2pt);
\draw[fill](90:2)--(150:0.5)node[anchor=-30]{$v_5$}circle(2pt);
\draw[fill](-30:2)--(30:0.5)circle(2pt);
\draw[fill](-30:2)--(-90:0.5)circle(2pt);
\draw[fill](-150:2)--(-90:0.5)node[anchor=90]{$v_4$}circle(2pt);
\draw[fill](-150:2)--(150:0.5)circle(2pt);
\draw (-90:0.5)--(150:0.5)--(30:0.5)--cycle;
\end{tikzpicture}
\caption{\label{t3}Triangular block $T_3$}
\end{figure}
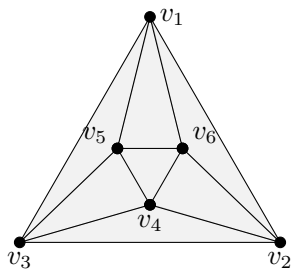
\end{definition}
\section{Lemmas for proving Theorem \ref{pt}}

\begin{lemma}\label{8+boundary}
Let $G$ be  a $C_7$-free, $2$-connected plane graph and let $F$ be a face of $G$ with length $l$ and $l\geq 8$. Then $F$ is incident to at most $l-8$ number of $T_3$ that share two edges with $F$.
\end{lemma}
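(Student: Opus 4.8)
The plan is to exploit $2$-connectedness so that the boundary of $F$ is a genuine simple cycle $C = u_0 u_1 \cdots u_{l-1} u_0$ of length $l$, and then to turn each offending $T_3$ into a shortcut that, if there were too many of them, would close up into a $C_7$. First I would record the one structural fact I need about $T_3$: its boundary is exactly its outer triangle (Figure~\ref{t3}), say $v_1 v_2 v_3$, all three of whose sides are edges of $G$, while every other face of $T_3$ is a triangle. Hence if a copy of $T_3$ shares two edges with $F$, those two edges are two of $v_1 v_2, v_2 v_3, v_3 v_1$. Any two sides of a triangle meet in a vertex, so after relabelling they are $v_1 v_2$ and $v_2 v_3$; since both are incident to $v_2$ and $C$ is a simple cycle, they must be the two consecutive edges of $C$ at $v_2$, and the remaining side $v_1 v_3$ is a chord of $G$ that does not lie on $C$ (if it did, $C$ would be the triangle itself, impossible as $l \ge 8$).

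Next I would set up the shortcut operation. Because the triangular blocks partition $E(G)$ (observation (ii)), distinct copies of $T_3$ meet $F$ in edge-disjoint pairs; consequently their apex vertices $v_2$ are distinct and any two of them are at cyclic distance at least $2$ along $C$. For a copy with apex $u_k$ (so its chord is $u_{k-1} u_{k+1}$), ``short-cutting'' it means deleting $u_k$ from $C$ and inserting the chord $u_{k-1} u_{k+1}$; this replaces two edges of $C$ by one and lowers the length by exactly $1$. Since the apexes are pairwise at distance at least $2$, short-cutting any set $S$ of the copies simultaneously produces a closed walk whose vertex set is $V(C)$ minus the chosen apexes and whose length is exactly $l - |S|$; the chords used are pairwise distinct edges of $G$ (they lie in distinct blocks) and none of them lies on $C$, so this walk is in fact a simple cycle of $G$ of length $l - |S|$.

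Now suppose $F$ is incident to $t$ copies of $T_3$ each sharing two edges with it. If $t \ge l - 7$, I would choose any $l - 7$ of these copies and short-cut them simultaneously; by the previous paragraph this yields a simple cycle of length $l - (l-7) = 7$ in $G$, contradicting that $G$ is $C_7$-free. Therefore $t \le l - 8$, as claimed, and this single argument covers all $l \ge 8$ uniformly, including the extreme case $l = 8$, where it recovers the statement that an $8$-face admits no such $T_3$ (consistent with Remark~\ref{r1}). The main thing to get right — and essentially the only place the argument can fail — is verifying that the short-cut walk is a genuinely \emph{simple} cycle rather than a closed walk with a repeated vertex or a pair of parallel edges; this is exactly where edge-disjointness of the blocks, the distance-$\ge 2$ spacing of the apexes, and the fact that each chord $v_1 v_3$ is a new edge off of $C$ all get used, so I would state and check these three points carefully before invoking $C_7$-freeness.
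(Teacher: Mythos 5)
Your proposal is correct and follows essentially the same route as the paper: replace the two consecutive boundary edges of each such $T_3$ by the third side of its outer triangle, shortening the facial cycle by one per block, so that more than $l-8$ of them would produce a $C_7$. The only differences are presentational — you justify consecutiveness via the degree of the apex on the simple facial cycle where the paper uses a cut-vertex argument, and you perform all shortcuts simultaneously (with an explicit check of simplicity) where the paper iterates one at a time.
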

\begin{proof}
Let the cycle formed by edges of $F$ be $C$ and let $a, b, c$ be the three outer vertices of a $T_3$, such that the $T_3$ shares edges $(ab),(bc)$ with $F$. Now observe that if $(ab),(bc)$ are not consecutive on $C$, then we will have Figure \ref{3path}. The gray area is $F$, and the two blue curves are trails on the boundary of $F$ each with at least $1$ vertex other than endpoints (not presented in Figure \ref{3path}). Now deleting $b$ will leave the vertices on the two different blue path disconnected, so $b$ is a cut vertex, therefore $G$ is not $2$-connected. Thus, we must have that $(ab),(bc)$ are consecutive on $C$.

Since $(ab),(bc)$ are consecutive on $C$, we know $C-\{(ab),(bc)\}+\{(ac)\}$ is a cycle of length $l-1$. If there is another $T_3$ incident to $F$ that shares two edges, we can repeat the same process (deleting the two shared edges from the cycle and adding the other edge on the boundary of $T_3$ to reduce the length of the cycle by $1$). We can always do this because no $T_3$'s can be incident to each other in a $C_7$-free graph. Since $G$ is $C_7$-free, it follows that $F$ is incident to at most $l-8$ number of $T_3$'s that share two edges with $F$.
\begin{figure}
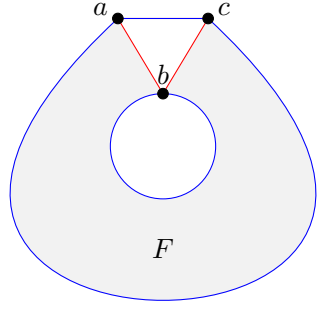

\centering
\bt
\dff (-0.6,2)..controls(-6,-3) and (6,-3)..(0.6,2)--cycle;
\draw[fill,color=white] (0,0.3)circle(0.7);
\draw[fill,color=white] (0,1)--(-0.6,2)--(0.6,2)--cycle;
\draw[color=red](0,1)--(-0.6,2)--(0.6,2)--cycle;
\draw[color=blue](-0.6,2)..controls(-6,-3) and (6,-3)..(0.6,2)--cycle;
\draw[color=blue](0,0.3)circle(0.7);
\df (-0.6,2)circle(2pt);
\df (0.6,2)circle(2pt);
\df(0,1)circle(2pt);
\draw (-0.6,2)node [anchor=-30]{$a$};
\draw (0.6,2)node [anchor=-150]{$c$};
\draw (0,1)node [anchor=-90]{$b$};
\draw (0,-0.8)node[anchor=90]{$F$};

\et
\caption{\label{3path}If 
 $(ab), (bc)$ are non-consecutive on $C$, then $G$ cannot be $2$-connected}
\end{figure}

\end{proof}
\begin{lemma}\label{replacement}

Let $G'$ be a $2$-connected, $C_7$-free plane graph on $n$ $(n\geq 8)$ vertices with $\delta(G')\geq 3$ and with any two adjacent vertices having total degree at least $7$. Then, from $G'$, we can construct another plane graph $G$ with the following properties:

$(1)$ If $e(G')>{18\over 7}v(G')-{48\over 7}$, then $e(G)>{18\over 7}v(G)-{48\over 7}.$

$(2)$ $v(G)\geq v(G')$;

$(3)$ $G$ is $2$-connected with $\delta(G)\geq 3$ and with any adjacent vertices having total degree at least $7$;

$(4)$ Since $G$ is a plane graph, we can consider the ``interior" of any cycle of $G$. We claim that each $7$-cycle of $G$ contains an $(8+)$-face in its interior; moreover, each $7$-cycle contains at least one edge that belongs to some $T_3$;

$(5)$ $G$ contains no small block set with only one edge;

$(6)$ $G$ contains no small block set with less than $7$ vertices while its boundary forms a single cycle;

$(7)$ $G$ contains no large triangular blocks other than $T_3$;

$(8)$ $G$ contains only triangular blocks that could appear in a $C_7$-free plane graph;

$(9)$ If $F$ is an $(8+)$-face of $G$ and it's adjacent to two edges of a $T_3$, then the cycle formed by it can the shortened by the processed described in Lemma \ref{8+boundary}. We claim that in this way, this cycle cannot be shortened into a $C_7$.
\end{lemma}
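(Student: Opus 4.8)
The plan is to obtain $G'$ by repeatedly applying local surgeries to the fixed embedding of $G$, keeping a short list of invariants and driving a complexity measure to zero to force termination. Throughout I would maintain three things: the structural hypotheses of (3) ($2$-connectedness, $\delta\geq 3$, and adjacent vertices of degree-sum $\geq 7$); monotonicity of $\mu(H):=e(H)-\frac{18}{7}v(H)$; and monotonicity of $v(H)$. If every surgery respects these, then the terminal graph satisfies (1), (2), (3) with no further argument, since the violation hypothesis of (1) is exactly $\mu(G)>-\frac{48}{7}$ and a non-decreasing $\mu$ preserves it. Because $\mu$ is non-decreasing precisely when a surgery obeys the \emph{density inequality} $\Delta e \geq \frac{18}{7}\,\Delta v$, the whole proof reduces to producing, for each forbidden configuration, a local replacement meeting this inequality together with the degree constraints.

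Properties (4), (8), (9) I would treat as loop invariants rather than terminal conditions. At the start $G$ is $C_7$-free, so it has no $7$-cycle and only triangular blocks that occur in $C_7$-free graphs; thus (4) and (8) hold vacuously. Each surgery I design so that any new $7$-cycle it produces bounds an $(8+)$-face on its inside and uses an edge of some $T_3$, and so that only legal block shapes are introduced; then (4) and (8) persist. Property (9) is then read off from Lemma \ref{8+boundary}: an $(8+)$-face of length $l$ meets at most $l-8$ of the relevant $T_3$'s, so the shortening process of that lemma decreases its boundary length by at most $l-8$ and cannot reach length $7$.

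The core is the elimination of violations of (7), then (5), then (6), measured lexicographically by the number of large triangular blocks, then the number of forbidden small block sets, and so on. For a large triangular block $T$, Definition \ref{ltb} and Remark \ref{r1} tell us its boundary is a single cycle $C$ bordering only $(8+)$-faces and admitting no short external chords between its vertices. I would excise $T$ together with the ring of $(8+)$-faces around $C$ and glue back a standardized chain of $T_3$'s separated by $8$-faces---the local pattern of the extremal graph of Theorem \ref{ct}---spanning the same boundary, on at least as many vertices, at density exactly $\frac{18}{7}$. Violations of (5) (a single-edge block set) and (6) (a block set on fewer than $7$ vertices with a single-cycle boundary) are removed by the same philosophy: rather than the naive move of capping the edge with a triangle (which adds only $2$ edges per vertex and so \emph{fails} the density inequality, as $2<\frac{18}{7}$), one adds chords inside the adjacent $(8+)$-faces or merges neighbouring blocks across those faces, pushing the block set past its threshold while keeping $\Delta e \geq \frac{18}{7}\Delta v$. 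Since each move is confined to one block and the $(8+)$-faces touching it, planarity and $2$-connectivity are preserved by inspection, and no move creates a new large block, so the measure strictly drops.

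The main obstacle is exactly the density inequality for the large-block replacement, carried out simultaneously with the degree-sum condition of (3). A triangulated disk can have interior density above $\frac{18}{7}$, so deleting it in isolation would lower $\mu$; the replacement only balances because the faces around $C$ are forced to length $\geq 8$, so the combined region consisting of $T$ together with its bordering $(8+)$-faces already has density at most $\frac{18}{7}$ and can be traded for the extremal pattern without loss. Pinning this down---specifying the replacement precisely, proving the combined region is no denser than $\frac{18}{7}$, and checking that every inserted vertex has degree $\geq 3$ and every inserted edge joins vertices of degree-sum $\geq 7$---is where essentially all the difficulty lies; termination and the invariants (4), (8), (9) are comparatively routine once this local bookkeeping is established.
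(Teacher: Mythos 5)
Your high-level framework matches the paper's: iterate local surgeries that eliminate the forbidden configurations one at a time, and reduce everything to the density inequality $\Delta e\geq \frac{18}{7}\Delta v$ together with preservation of the degree and connectivity conditions. But the core step --- the actual replacement for a large triangular block or a small block set on fewer than $7$ vertices --- is where your proposal diverges and where the gap lies. You propose to excise the block \emph{together with the ring of $(8+)$-faces around its boundary} and glue in an extremal-type pattern spanning the same outer boundary, justified by an unproven claim that the combined region has density at most $\frac{18}{7}$. This move is underspecified and hard to control: the outer boundary of that combined region is an arbitrary long cycle, the excision destroys other triangular blocks incident to those $(8+)$-faces (so the measure need not drop cleanly), and nothing pins down how the ``standardized chain of $T_3$'s separated by $8$-faces'' is to be embedded in it while keeping $2$-connectivity, the degree-sum condition, and property (4). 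The idea you are missing is that in a $C_7$-free graph every triangular block has at most $6$ vertices and its boundary is a cycle of length $c\in\{3,4,5,6\}$, so one never needs to touch the surrounding faces at all: delete only the interior of the boundary cycle, bound the number of deleted edges by triangulating ($e(T')=3v(T')-3-c\leq 12$), and insert a fixed configuration of one to four $T_3$'s anchored on the $c$ boundary vertices inside that same region. The ratio $\frac{\Delta e}{\Delta v}$ then computes to $3$, $\frac{13}{5}$, $\frac{26}{10}$, $\frac{39}{15}$ for $c=3,4,5,6$, all exceeding $\frac{18}{7}$, and the same insertion of a full $T_3$ (not a single capping triangle) on the two endpoints handles the $K_2$ case with $\frac{\Delta e}{\Delta v}=\frac{11}{4}>\frac{18}{7}$ --- the correct move you explicitly looked for but did not find, falling back instead on a vague ``add chords inside the adjacent $(8+)$-faces,'' which risks creating $7$-cycles.

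A second, smaller gap is your treatment of property $(9)$: you invoke Lemma 6 (that an $l$-face meets at most $l-8$ of the relevant $T_3$'s) directly for $G'$, but that lemma is proved \emph{using} $C_7$-freeness, and $G'$ is not $C_7$-free --- property $(4)$ explicitly permits $7$-cycles enclosing $(8+)$-faces. So the appeal is circular. The intended argument is different: the surgeries create no new $(8+)$-faces and never decrease distances, so shortening an $(8+)$-face of $G'$ through the newly added $T_3$'s can at worst recover the length of the corresponding face of $G$, and any further shortening through original $T_3$'s would exhibit a $C_7$ in $G$ itself, a contradiction.
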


In this lemma we are trying to create a simplified graph where we replace things like large triangular blocks with $T_3$'s, such that by property $(1)$ and $(2)$ it suffices to prove the desired bound ($e \leq {18\over 7}v - {48\over 7}$) for this new graph. In this simplified graph, property $(3),(4),(5),(6),(7),$ and $(8)$ allow us to prove Lemma \ref{smallblockslemma} by calculating vertex and face contribution for each of the remaining small triangular blocks. Then, with property $(9)$ and Lemma \ref{smallblockslemma}, we manage to prove this bound ($e \leq {18\over 7}v - {48\over 7}$) by using dual graphs, as outlined in the proof of Theorem \ref{pt}.
\begin{proof}
Since $G'$ is $2$-connected, every edge borders two faces. We will construct $G$ inductively by using $T_3$ to replace those items that we don't want. Specifically, let $T$ be one of the following in $G'$: an edge that borders two $(8+)$-faces, a small block set on less than $7$ vertices whose boundary forms a single cycle, or a large triangular block. We will replace $T$ with $T_3$'s and continue doing replacement in the rest of the graph. Since each time we reduce by $1$ the number of items we don't want, finally we'll get a graph $G$ satisfying property $(5),(6),$ and $(7)$. As we drew all possible small triangular blocks in a $C_7$-free plane graph, we found none of them has more than $6$ vertices. Thus we remove at most $6$ vertices each time. Since we add at least a $T_3$ each time, which contains at least $6$ vertices, property $(2)$ is satisfied. In our construction process, it will be clear that properties $(3)$ and $(4)$ are true. Property $(8)$ is true because the edges we added are only contained in $T_3$ triangular blocks, and the original triangular blocks of $G$, if changed during the replacement process, are either deleted or become a plane subgraph of original triangular block.

Property $(9)$ is a bit tricky. Notice that we did not create any new $(8+)$-faces in the process described below and that during each replacement, the shortest distance between any two vertices is not shortened. Therefore, if the  cycle formed by an $(8+)$-face is shortened only by the new $T_3$'s we added, it can at most be shortened to the size of the original $(8+)$-face in $G$. If this can be further shortened to a $C_7$ using original(not added) $T_3$'s in $G'$, then we have a $C_7$ in $G'$ which is not allowed.

We will prove property $(1)$ as we describe the construction process.

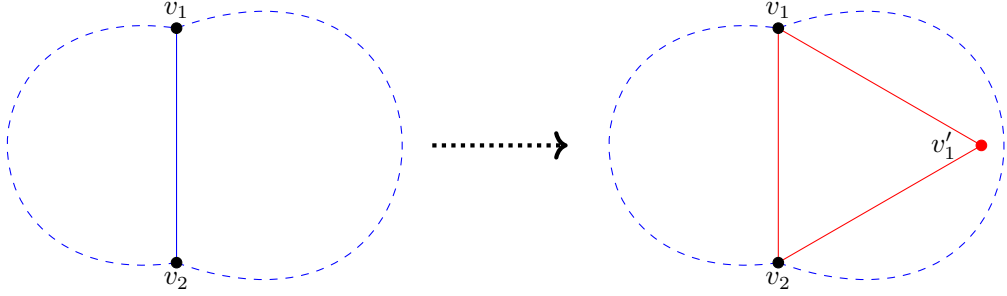
\begin{figure}
\centering
\begin{tikzpicture}
\draw[xshift=-4cm,blue](0,-1.56)--(0,1.56);
\draw[xshift=-4cm,blue,dashed](0,-1.56)..controls (-3,-2)and (-3,2)..(0,1.56);
\draw[xshift=4cm,blue,dashed](0,-1.56)..controls (-3,-2)and (-3,2)..(0,1.56);
\draw[xshift=4cm,blue,dashed](0,-1.56)..controls (4,-3)and (4,3)..(0,1.56);
\draw[xshift=-4cm,blue,dashed](0,-1.56)..controls (4,-3)and (4,3)..(0,1.56);
\draw[ultra thick,dotted,->](-0.6cm,0)--(1.2cm,0);
\draw[xshift=4.9cm,red](120:1.8)--(-120:1.8)--(0:1.8)--cycle;
\df (-4,-1.56)circle(2pt)node[anchor=90]{$v_2$};
\df (-4,1.56)circle(2pt)node[anchor=-90]{$v_1$};
\df (4,-1.56)circle(2pt)node[anchor=90]{$v_2$};
\df (4,1.56)circle(2pt)node[anchor=-90]{$v_1$};
\draw (6.5,0)node[anchor=0]{$v_1'$};
\draw[fill,red] (6.7,0)circle(2pt);
\end{tikzpicture}
\caption{\label{rp2}When $T=K_2$}
\end{figure}

\textbf{Case 1: }$T=K_2$.

We perform the replacement shown in Figure \ref{rp2}. In the left of the arrow, $v_1v_2$ is an edge in $G'$ and both faces adjacent to it, represented by blue curves, are $(8+)$-faces. In the right of the arrow, we delete edge $(v_1v_2)$ and attached a $T_3$ within these two $(8+)$-faces where $v_1,v_2$ serves as two vertices on the boundary of this $T_3$. All other vertices of this $T_3$ are new vertices added to the graph.

Now let's show property $(3)$ and $(4)$ are preserved. It's trivial that each vertex has degree at least $3$ in the new graph. For any two adjacent vertex within $T_3$, they have degree sum at least $7$. Since we didn't change the rest of the graph, the only possible contradiction to $(3)$ is $v_1$ or $v_2$ has some adjacent vertex not in the $T_3$ whose degree is $1$, which is a contradiction with $G'$ being $2$-connected. For property $(4)$, since there is no $C_7$ within the $T_3$ we added, the only possible $C_7$ that formed through this replacement must uses both vertices within the $T_3$ and outside the $T_3$. But all exterior faces of this $T_3$ are $(8+)$-faces, so any $C_7$ formed must contain a $(8+)$-face in its interior.

During this replacement, we deleted one edge and added $12$ edges and $4$ vertices. Therefore,
\begin{align*}
e(G')&> {18\over 7}v(G')-{48\over 7}\\
\Rightarrow e(G)&=e(G')+11\\
&> {18\over 7}(v(G')+4)-{48\over 7}+(11-{18\over 7}\cdot 4)\\
&> {18\over 7}v(G)-{48\over 7}.
\end{align*}

In the rest cases, the boundary of $T$ forms a single cycle. Assume $e(G')\geq {18\over 7}v(G')-{48\over 7}$. Let $c$ be the length of the cycle formed by the boundary of $T$. Take the induced plane subgraph of $T$ and add edges inside its boundary randomly to make its interior triangulated. Let this new subgraph be called $T'$. By Euler's formula,
\begin{align*}
f(T')&=e(T')-v(T')+2\\
2e(T')&=3(f(T')-1)+c\\
e(T')&=3v(T')-3-c.
\end{align*}

\textbf{Case 2:} $c=3$.

We perform the replacement shown in Figure \ref{rp3}.
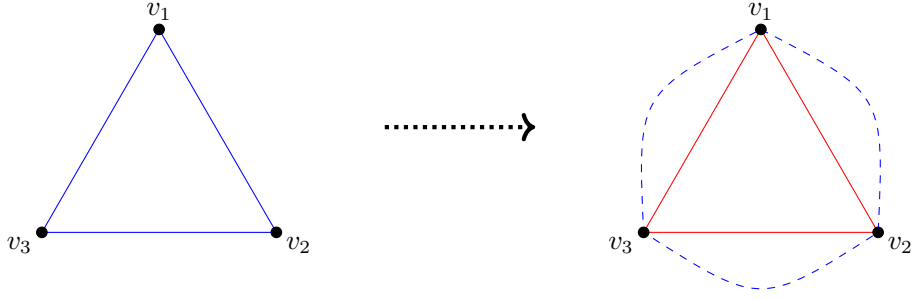
\begin{figure}
\centering
\begin{tikzpicture}
\draw[xshift=4cm,dashed,blue] (90:1.8)..controls(30:1.9)..(-30:1.8)..controls(-90:1.9)..(-150:1.8)..controls(150:1.9)..(90:1.8);
\draw[xshift=-4cm,blue](90:1.8)--(-30:1.8)--(-150:1.8)--cycle;
\draw[xshift=4cm,red](90:1.8)--(-30:1.8)--(-150:1.8)--cycle;
\draw[ultra thick,dotted,->](-1cm,0.5cm)--(1cm,0.5cm);
\draw[fill,xshift=-4cm](90:1.8)node[anchor=-90]{$v_1$}circle(2pt);
\draw[fill,xshift=-4cm](-30:1.8)node[anchor=150]{$v_2$}circle(2pt);
\draw[fill,xshift=-4cm](-150:1.8)node[anchor=30]{$v_3$}circle(2pt);

\draw[fill,xshift=4cm](90:1.8)node[anchor=-90]{$v_1$}circle(2pt);
\draw[fill,xshift=4cm](-30:1.8)node[anchor=150]{$v_2$}circle(2pt);
\draw[fill,xshift=4cm](-150:1.8)node[anchor=30]{$v_3$}circle(2pt);

\end{tikzpicture}
\caption{\label{rp3}When boundary of $T$ forms a $C_3$}
\end{figure}
In the left of the arrow, triangle $v_1v_2v_3$ is the boundary of $T$. There might be other vertices of $T$. We delete every vertex of $T$ that is not on its boundary, and also delete every edge on its boundary. Then we add a $T_3$, represented by the red triangle on the right of the arrow, whose boundary is $v_1v_2v_3$. The blue dashed lines at the right of the arrow is to show the replacement happens ``inside" triangle $v_1v_2v_3$ at the left of the arrow, so after the replacement the graph is still plane graph. Denote the graph after the replacement as $G$. Suppose $e(G')>{18\over 7}v(G')-{48\over 7}$, we will show $e(G)>{18\over 7}v(G)-{48\over 7}$.
\begin{align*}
e(G')&>{18\over 7}v(G')-{48\over 7}\\
v(G)&=v(G')+6-v(T')\\
e(G)&\geq e(G')+12-e(T')\\
e(G)&>{18\over 7}v(G)-{48\over 7}-{18\over 7}(6-v(T'))+(12-e(T')).
\end{align*}
Remember that $3\leq v(T')\leq 6$. When $v(T')=6$, $-{18\over 7}(6-v(T'))+(12-e(T'))=12-e(T')=12-(3v(T')-3-c)=0$. When $v(T')<6$,
${12-e(T')\over 6-v(T')}={12-3v(T')+3+c\over 6-v(T')}={18-3v(T')\over 6-v(T')}=3> {18\over 7}$.

\textbf{Case 3:} $c=4$.

We perform the replacement shown in Figure \ref{rp4}.
\begin{figure}
\centering
\begin{tikzpicture}
\draw[xshift=-4cm,blue](45:2.6)--(135:2.6)--(-135:2.6)--(-45:2.6)--cycle;
\draw[ultra thick,dotted,->](-1cm,0.5cm)--(1cm,0.5cm);
\draw[xshift=4cm,blue,dashed](45:2.6)..controls(90:2.8)..(135:2.6)..controls(180:2.8)..(-135:2.6)..controls(-90:2.8)..(-45:2.6)..controls(0:2.8)..(45:2.6);
\draw[xshift=4cm,red](45:2.6)--(135:2.6)--(0,0)--cycle;
\draw[xshift=4cm,red](-45:2.6)--(-135:2.6)--(0,0)--cycle;
\draw[fill,xshift=-4cm](45:2.6)node[anchor=-135]{$v_1$}circle(2pt);
\draw[fill,xshift=-4cm](135:2.6)node[anchor=-45]{$v_2$}circle(2pt);
\draw[fill,xshift=-4cm](-135:2.6)node[anchor=45]{$v_3$}circle(2pt);
\draw[fill,xshift=-4cm](-45:2.6)node[anchor=135]{$v_4$}circle(2pt);

\draw[fill,xshift=4cm](45:2.6)node[anchor=-135]{$v_1$}circle(2pt);
\draw[fill,xshift=4cm](135:2.6)node[anchor=-45]{$v_2$}circle(2pt);
\draw[fill,xshift=4cm](-135:2.6)node[anchor=45]{$v_3$}circle(2pt);
\draw[fill,xshift=4cm](-45:2.6)node[anchor=135]{$v_4$}circle(2pt);
\draw[xshift=4cm](0,0)node[anchor=-90]{$v_1'$};
\draw[fill,red,xshift=4cm](0,0)circle(2pt);

\end{tikzpicture}
\caption{\label{rp4}When boundary of $T$ forms a $C_4$}
\end{figure}
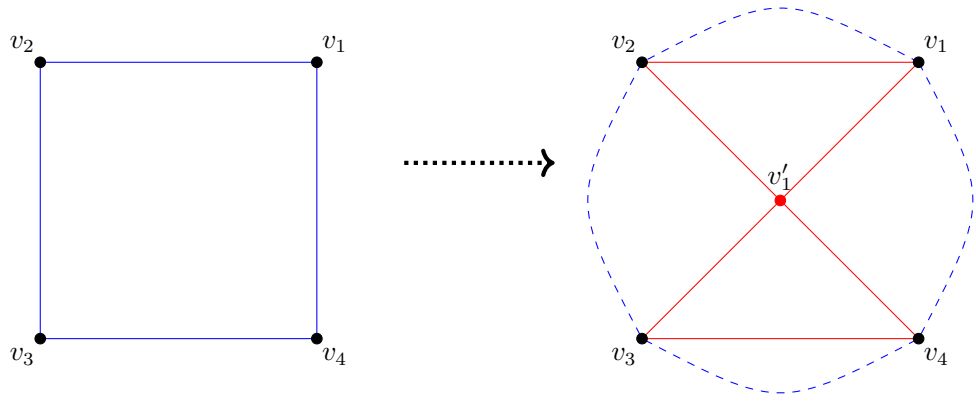
Similar to Case 2,
\begin{align*}
v(G)&=v(G')+11-v(T')\\
e(G)&=e(G')+24-e(T')\\
e(G)&>{18\over 7}v(G)-{48\over 7}-{18\over 7}(11-v(T'))+(24-e(T'))\\
{24-e(T')\over 11-v(T')}&={24-3v(T')+3+c\over 11-v(T')}\\
&={31-3v(T')\over 11-v(T')}\\
&\geq {31-3\cdot 6\over 11-6}={13\over 5}>{18\over 7}.
\end{align*}

\textbf{Case 4:} $c=5$.

We perform the replacement shown in Figure \ref{rp5}.
\begin{figure}
\centering
\begin{tikzpicture}
\draw[blue,xshift=-4cm](18:1.8)--(90:1.8)--(162:1.8)--(234:1.8)--(306:1.8)--cycle;
\draw[blue,xshift=4cm,dashed](18:1.8)..controls(54:1.9)..(90:1.8)..controls(126:1.9)..(162:1.8)..controls(198:1.9)..(234:1.8)..controls(270:1.9)..(306:1.8)..controls(-18:1.9)..(18:1.8);
\draw[red,xshift=4cm](162:1.8)--(234:0.6)--(234:1.8)--cycle;
\draw[red,xshift=4cm](18:1.8)--(90:1.8)--(18:0.6)--cycle;
\draw[red,xshift=4cm](234:0.6)--(18:0.6)--(306:1.8)--cycle;
\draw[ultra thick,dotted,->](-1cm,0.5cm)--(1cm,0.5cm);
\draw[fill,xshift=-4cm](18:1.8)node[anchor=198]{$v_5$}circle(2pt);
\draw[fill,xshift=-4cm](90:1.8)node[anchor=-90]{$v_1$}circle(2pt);
\draw[fill,xshift=-4cm](162:1.8)node[anchor=-18]{$v_2$}circle(2pt);
\draw[fill,xshift=-4cm](234:1.8)node[anchor=54]{$v_3$}circle(2pt);
\draw[fill,xshift=-4cm](306:1.8)node[anchor=126]{$v_4$}circle(2pt);

\draw[fill,xshift=4cm](18:1.8)node[anchor=198]{$v_5$}circle(2pt);
\draw[fill,xshift=4cm](90:1.8)node[anchor=-90]{$v_1$}circle(2pt);
\draw[fill,xshift=4cm](162:1.8)node[anchor=-18]{$v_2$}circle(2pt);
\draw[fill,xshift=4cm](234:1.8)node[anchor=54]{$v_3$}circle(2pt);
\draw[fill,xshift=4cm](306:1.8)node[anchor=126]{$v_4$}circle(2pt);

\draw[red,xshift=4cm,fill](234:0.6)circle(2pt);
\draw[xshift=4cm](234:0.6)node[anchor=100]{$v_1'$};
\draw[red,xshift=4cm,fill](18:0.6)circle(2pt);
\draw[xshift=4cm](18:0.6)node[anchor=135]{$v_2'$};

\end{tikzpicture}
\caption{\label{rp5}When boundary of $T$ forms a $C_5$}
\end{figure}
Similar to Case 2,
\begin{align*}
v(G)&=v(G')+16-v(T')\\
e(G)&\geq e(G')+36-e(T')\\
e(G)&>{18\over 7}v(G)-{48\over 7}-{18\over 7}(16-v(T'))+(36-e(T'))\\
{36-e(T')\over 16-v(T')}&={36-3v(T')+3+c\over 16-v(T')}\\
&={44-3v(T')\over 16-v(T')}\\
&\geq {44-3\cdot 6\over 16-6}={26\over 10}>{18\over 7}.
\end{align*}

\textbf{Case 5:} $c=6$.

We perform the replacement shown in Figure \ref{rp6}.
\begin{figure}
\centering
\begin{tikzpicture}
\draw[xshift=4cm,red](70:1.2)--(60:2.1)--(120:2.1)--cycle;
\draw[xshift=4cm,red](70:1.2)--(-30:0.6)--(0:2.1)--cycle;
\draw[xshift=4cm,red](-130:1.2)--(-30:0.6)--(-60:2.1)--cycle;
\draw[xshift=4cm,red](-130:1.2)--(180:2.1)--(-120:2.1)--cycle;
\draw[ultra thick,dotted,->](-1cm,0.5cm)--(1cm,0.5cm);
\draw[xshift=4cm,dashed,blue](0:2.1)..controls(30:2.2)..(60:2.1)..controls(90:2.2)..(120:2.1)..controls(150:2.2)..(180:2.1)..controls(210:2.2)..(240:2.1)..controls(270:2.2)..(300:2.1)..controls(-30:2.2)..(0:2.1);
\draw[blue,xshift=-4cm](0:2.1)--(60:2.1)--(120:2.1)--(180:2.1)--(-120:2.1)--(-60:2.1)--cycle;
\draw[fill,xshift=-4cm](60:2.1)node[anchor=-120]{$v_2$}circle(2pt);
\draw[fill,xshift=-4cm](0:2.1)node[anchor=180]{$v_1$}circle(2pt);
\draw[fill,xshift=-4cm](120:2.1)node[anchor=300]{$v_3$}circle(2pt);
\draw[fill,xshift=-4cm](180:2.1)node[anchor=0]{$v_4$}circle(2pt);
\draw[fill,xshift=-4cm](-120:2.1)node[anchor=60]{$v_5$}circle(2pt);
\draw[fill,xshift=-4cm](-60:2.1)node[anchor=120]{$v_6$}circle(2pt);

\draw[fill,xshift=4cm](60:2.1)node[anchor=-120]{$v_2$}circle(2pt);
\draw[fill,xshift=4cm](0:2.1)node[anchor=180]{$v_1$}circle(2pt);
\draw[fill,xshift=4cm](120:2.1)node[anchor=300]{$v_3$}circle(2pt);
\draw[fill,xshift=4cm](180:2.1)node[anchor=0]{$v_4$}circle(2pt);
\draw[fill,xshift=4cm](-120:2.1)node[anchor=60]{$v_5$}circle(2pt);
\draw[fill,xshift=4cm](-60:2.1)node[anchor=120]{$v_6$}circle(2pt);
\draw[red,xshift=4cm,fill](70:1.2)circle(2pt);
\draw[xshift=4cm](70:1.2)node[anchor=-80]{$v_3'$};
\draw[red,xshift=4cm,fill](-130:1.2)circle(2pt);
\draw[xshift=4cm](-130:1.2)node[anchor=20]{$v_1'$};
\draw[red,xshift=4cm,fill](-30:0.6)circle(2pt);
\draw[xshift=4cm](-30:0.6)node[anchor=-30]{$v_2'$};

\end{tikzpicture}
\caption{\label{rp6}When boundary of $T$ forms a $C_6$}
\end{figure}
Similar to Case 2,
\begin{align*}
v(G)&=v(G')+21-v(T')\\
e(G)&\geq e(G')+48-e(T')\\
e(G)&>{18\over 7}v(G)-{48\over 7}-{18\over 7}(16-v(T'))+(36-e(T'))\\
{48-e(T')\over 21-v(T')}&={48-3v(T')+3+c\over 21-v(T')}\\
&={57-3v(T')\over 21-v(T')}\\
&\geq {57-3\cdot 6\over 21-6}={39\over 15}>{18\over 7}.
\end{align*}

\end{proof}

\begin{lemma}\label{adjacencylemma}
If $G$ is $2$-connected, $C_7$-free plane graph and $G\neq T_3$, then every incident face of any $T_3$ in $G$ has to be an $(8+)$-face.
\end{lemma}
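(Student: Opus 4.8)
The plan is to argue by contradiction: assuming some face $F$ that borders a copy $T$ of $T_3$ along one of its boundary edges has length at most $7$, I will exhibit a $7$-cycle in $G$, contradicting the standing $C_7$-freeness hypothesis of this section. Label $T$ as in Figure \ref{t3}, with outer (boundary) triangle $v_1v_2v_3$ and inner triangle $v_4v_5v_6$, where $v_1\sim v_5,v_6$, $v_2\sim v_4,v_6$, and $v_3\sim v_4,v_5$. I would first record two structural facts. (a) Since $G$ is $2$-connected, every face of $G$ is bounded by a cycle, so a face of length $7$ is literally a $C_7$ and is immediately forbidden. (b) The inner vertices $v_4,v_5,v_6$ are incident only to the seven triangular faces of $T$, hence they never lie on an external face such as $F$. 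Consequently the only boundary edges that an external face can meet are $v_1v_2,v_2v_3,v_3v_1$, and by the rotational symmetry of $T_3$ I may assume $F$ borders $v_1v_2$.

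The core of the argument is a short path-length computation inside $T$. Writing $P$ for the path along $\partial F$ from $v_1$ to $v_2$ that avoids the edge $v_1v_2$, the assumption that $F$ has length $l$ means $P$ has length $l-1$, with $3\le l\le 6$ (the value $l=7$ is excluded by (a), and all faces have length at least $3$). I would then splice $P$ together with a path $Q$ from $v_1$ to $v_2$ running through $T$ of length $8-l$, chosen internally disjoint from $P$; the union $P\cup Q$ is then a cycle of length $(l-1)+(8-l)=7$. The required paths are $Q=v_1v_6v_2$ of length $2$ when $l=6$, $Q=v_1v_5v_4v_2$ of length $3$ when $l=5$, $Q=v_1v_5v_4v_6v_2$ of length $4$ when $l=4$, and $Q=v_1v_3v_4v_5v_6v_2$ of length $5$ when $l=3$. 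For $l\in\{4,5,6\}$ the path $Q$ uses only inner vertices internally, which by fact (b) cannot lie on $P$, so disjointness is automatic and the $C_7$ follows at once.

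The one delicate case, and the place where I expect the (modest) main subtlety to sit, is $l=3$: here $Q$ is forced to pass through the third boundary vertex $v_3$, since $T$ has only three inner vertices and so admits no interior-only path of length $5$. In this case $F=v_1v_2w$ is a triangle, and the construction yields the $7$-cycle $v_1v_3v_4v_5v_6v_2w$ provided $w\notin\{v_3,v_4,v_5,v_6\}$; fact (b) rules out $w\in\{v_4,v_5,v_6\}$, leaving only $w=v_3$ to dispose of. But $w=v_3$ means $F$ is the triangle $v_1v_2v_3$, i.e. the boundary cycle of $T$ bounds a face on its exterior as well; since the interior side of this triangle is already completely triangulated by $T$, the graph can contain no further vertices, forcing $G=T_3$, which is excluded by hypothesis. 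This is precisely why the assumption $G\ne T_3$ is needed. Collecting the cases, no face incident to $T$ can have length in $\{3,4,5,6,7\}$, so every incident face has size at least $8$.
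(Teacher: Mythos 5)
Your proof is correct and follows essentially the same route as the paper's: assume a face of length at most $7$ borders the block, use the fact that $T_3$ contains paths of every length from $1$ to $5$ between two boundary vertices, and splice a complementary-length path through $T_3$ onto the face boundary to produce a forbidden $C_7$, with the degenerate case forcing $G=T_3$. Your observation that the inner vertices $v_4,v_5,v_6$ are sealed off by the seven internal faces (so only the $l=3$ case can collide with $T$) packages the paper's case analysis on how many edges and which vertices of the block the face touches a bit more cleanly, but the underlying argument is the same.
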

\begin{figure}
\centering
\begin{tikzpicture}

\draw[red] (90:2)..controls(60:2) and (0:2)..(-30:2);
\draw[red] (-30:2)..controls(45:6) and (135:6)..(-150:2);
\draw[blue] (90:2)--(-30:2)--(-150:2)--cycle;
\draw[fill](90:2)circle(2pt)node[right]{$a$};
\draw[fill](-30:2)circle(2pt)node[below]{$c$};
\draw[fill](-150:2)circle(2pt)node[below]{$b$};
\draw[red] (30:1)node[anchor=-150]{$l_1$};
\draw[red] (90:2.3)node[anchor=-90]{$l_2$};
\end{tikzpicture}
\caption{\label{fc}When $f$ incident to $c$}
\end{figure}
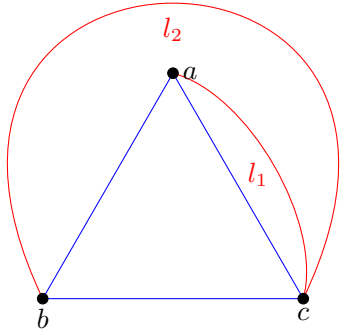
\begin{proof}
Let $f$ be an exterior face of a $T_3$. For contradiction assume $|f|<8$. We know $f$ uses at most $2$ edges of $T_3$, otherwise we either have $G=T_3$ or $G$ being not $2$-connected. Let the vertices on the boundary of this $T_3$ be $a,b,c$. If $f$ uses two edges of $T_3$, WLOG $(ab),(bc)$, then $f-\{(ab),(bc)\}$ is a path with length at most $5$ and at least $2$ between $a,c$ and doesn't use any vertex of $T_3$ other than $a,c$. Since within $T_3$, $a,c$ has path of any length between $1$ and $5$, we can easily find a $7$-cycle.

On the other hand, if $f$ uses only one edge of $T_3$, say $(ab)$, then $f$ has be incident to the other vertex $c$ (Figure \ref{fc}). If $f$ is not incident to $c$, then $f-\{(ab)\}$ is a path between $a,b$ that uses no other vertex of $T_3$ with length between $2$ and $5$ and we can find a $7$-cycle. Suppose $f$ is incident to $c$, then $f$ must be the concatenation of three parts: edge $(a,b)$, path $l_2$, and path $l_1$. Observe that $l_2$ is a path with length at most $4$ and at least $2$ between $a,c$ and doesn't use any vertex of $T_3$ other than $a,c$. Thus we can find a $7$-cycle.
\end{proof}

\begin{lemma}
\label{smallblockslemma}
Let $G'$ be a $2$-connected, $C_7$-free, plane graph on $n$ ($n\geq 8$) vertices with $\delta(G')\geq 3$ and with any adjacent vertices having total degree at least $7$. Let $G$ be the plane graph obtained from $G'$ by doing the replacement described in Lemma \ref{replacement}. Then,
\begin{equation*}
\sum_{B\in S(G)}f_B \leq {41\over 72}\sum_{B\in S(G)}e_B^\partial+{11\over 18}\sum_{B\in S(G)}e_B^{int}.
\end{equation*}

\end{lemma}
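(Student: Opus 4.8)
The plan is to expand both sides as sums over individual edges and to show that the whole inequality is driven by the edges lying between two triangles. Because the triangular blocks partition $E(G')$ and $f_B,\,e_B^\partial,\,e_B^{int}$ are additive over the edges of a block, the left side equals $\sum_e f^*(e)$ summed over the edges lying in small blocks; by Lemma \ref{replacement}(7) there are no large blocks, so this is the sum over \emph{every} edge. I would then classify each edge $e$ by the unordered pair $(l_1,l_2)$ of lengths of its two incident faces. Writing $\tfrac{11}{18}=\tfrac{44}{72}$, the right side attaches weight $\tfrac{44}{72}$ to each edge counted in $e^{int}$ (both incident faces of length $\le 6$) and $\tfrac{41}{72}$ to each edge counted in $e^\partial$ (some incident face of length $\ge 8$). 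Comparing these weights against $f^*(e)$ case by case, every edge turns out to have non-negative ``slack'' except the edges of type $(3,3)$, for which $f^*(e)=\tfrac{48}{72}$ leaves a deficit of $\tfrac{4}{72}$; edges of type $(8+,8+)$ cannot occur, since they would form one-edge blocks forbidden by Lemma \ref{replacement}(5). Hence the lemma is equivalent to the single statement that the total deficit carried by the $(3,3)$-edges is at most the total slack of all remaining edges.

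Next I would localise this to one small blocks set $\mathcal S$ at a time (Definition \ref{sbs}); these sets partition the small blocks and the $(8+)$-faces are exactly the faces separating them, so summing the per-set inequalities recovers the lemma. Inside $\mathcal S$ the $(3,3)$-edges are precisely the interior edges of its triangular blocks, while the slack is supplied by the $4$-, $5$-, and $6$-faces interior to $\mathcal S$ and by the edges on its $(8+)$-boundary. I would apply Euler's formula to the planar region $R$ cut out by those $(8+)$-faces: letting $t,q,p_5,p_6$ be the numbers of $3$-, $4$-, $5$-, $6$-faces of $R$ and $\beta$ its boundary length, the incidence identity $2e(R)=3t+4q+5p_5+6p_6+\beta$ together with Euler's formula expresses the number $n_{33}$ of interior $(3,3)$-edges linearly in $t,q,p_5,p_6,\beta$. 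After substitution the deficit-versus-slack inequality collapses to a bound of the form $3t\le 8q+19p_5+30p_6+5\beta$; that is, the number of triangles in $\mathcal S$ must be paid for by its non-triangular faces and its boundary edges (adjacent non-triangular faces only enlarge the available slack, so they help).

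Establishing this triangle bound is the real content and the step I expect to be hardest, because it is false block-by-block: a $K_4$ packs four triangles onto a $3$-edge boundary, and $T_3$ is worse still, so a single dense block cannot pay for its own triangles out of its own boundary. The escape is that such blocks cannot be self-contained in a $C_7$-free graph satisfying the degree hypotheses. I would use Lemma \ref{replacement}(7) and $C_7$-freeness to restrict the small blocks to a short finite catalogue of bounded size, and then exploit $\delta(G')\ge 3$ together with the ``adjacent vertices have total degree at least $7$'' hypothesis to force extra incident edges and faces around the dense blocks: no two adjacent vertices both have degree $3$, so the boundary vertices of a $K_4$ must have degree $\ge 4$ and therefore spawn additional edges that separate the surrounding faces. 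The remaining work is a discharging argument over this catalogue: push the deficit $\tfrac{4}{72}$ of each interior $(3,3)$-edge outward onto the small faces and boundary edges of $\mathcal S$, and verify that no $4$-, $5$-, or $6$-face and no boundary edge receives more than the slack it provides ($8,28,48$ and $8$, in units of $\tfrac1{72}$, respectively). I expect the tight cases — clusters of $K_4$-type blocks, and several blocks sharing a common $5$- or $6$-face — to demand the most careful bookkeeping, and this catalogue-plus-discharging verification to be the bulk of the argument.
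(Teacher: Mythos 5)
Your opening reduction is sound and is arguably a cleaner way to organize the bookkeeping than the paper's: since the triangular blocks partition the edges and both sides of the inequality are additive over the edges of the small blocks, the claim is equivalent to comparing $f^*(e)$ against the weight $\tfrac{41}{72}$ or $\tfrac{44}{72}$ edge by edge, and the only edges in deficit are those lying between two triangular faces, each short by $\tfrac{1}{18}$. But two things go wrong after that. First, the Euler step as stated is false: the incidence identity $2e(R)=3t+4q+5p_5+6p_6+\beta$ together with Euler's formula determines the numbers of edges and faces of the region, but not the number $n_{33}$ of edges with \emph{both} incident faces triangular; that quantity depends on how the $t$ triangles are glued to one another, not merely on how many there are (counting edge--triangle incidences gives only $2n_{33}+m=3t$, where $m$ is the number of edges with exactly one triangular side, i.e.\ one equation in two unknowns). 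So the claimed collapse to a linear inequality in $t,q,p_5,p_6,\beta$ does not follow. A smaller slip: property $(7)$ of Lemma \ref{replacement} does not make your sum run over all edges of $G'$, since the $T_3$ blocks introduced by the replacement are excluded from $S(G')$ and from both sides of the inequality; this happens not to hurt the per-edge comparison, but the "sum over every edge" claim is wrong.

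Second, and more importantly, you have deferred essentially the entire proof. You correctly observe that the inequality fails block-by-block for configurations such as a $K_4$ surrounded by $4$-faces, so the structural hypotheses ($C_7$-freeness, $\delta\ge 3$, adjacent degree sums at least $7$, and properties $(4)$--$(6)$ of Lemma \ref{replacement}) must be invoked to rule such configurations out or to locate compensating $(8+)$-faces nearby. That is precisely what the paper's proof consists of: an enumeration of the small blocks on at most six vertices ($B_2$, $B_3$, $B_{4,a}$, $B_{4,b}$, $B_{5,a}$--$B_{5,e}$, $B_{6,a}$--$B_{6,r}$) together with, for each, a geometric argument showing for instance that at most one boundary edge of $B_{4,b}$ can border a $4$-face, that $B_{5,d}$ cannot occur at all, or that at least two boundary edges of the blocks with $C_5$ boundary must meet $(8+)$-faces. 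Your proposal names this step ("a discharging argument over this catalogue\dots the bulk of the argument") but carries out none of it, and the edge-based reformulation does not make any of those structural verifications easier; it only changes what must be checked once a configuration is on the table. As it stands the proposal is a plan for a proof rather than a proof.
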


\begin{proof}
We will do case analysis. Notice that when a face is adjacent to only one triangular block that is not a trivial block($K_2)$, we can uniquely assign that face, together with the trivial blocks on its boundary, to that non-trivial triangular block and do the calculation together.

\textbf{Case 1: }$B$ is $B_2$.
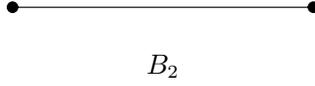
\begin{figure}
\centering
\begin{tikzpicture}
\draw[fill](-2,0)circle(2pt)--(2,0)circle(2pt);
\draw (0,-0.5) node[below] {$B_{2}$};
\end{tikzpicture}
\caption{\label{b2}Triangular block on two vertices(trivial block)}
\end{figure}

The faces adjacent to $B_2$ cannot have length $2$ because $G$ is connected and $n\geq 7$. It cannot have length $3$ because otherwise we would end up with a larger triangular block by Definition \ref{tb}. Hence,
\begin{align*}
f_B&\leq {1\over 4}+{1\over 4}={1\over 2}\\
{41\over 72}e_B^\partial+{11\over 18}e_B^{int}&\geq {41\over 72}\\
f_B&\leq {41\over 72}e_B^\partial+{11\over 18}e_B^{int}.
\end{align*}

\textbf{Case 2: }$B$ is $B_3$.
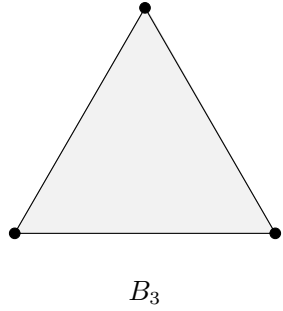
\begin{figure}
\centering
\begin{tikzpicture}
\draw[fill,color=gray!10] (-30:2)--(90:2)--(210:2)--cycle;
\draw[fill] (-30:2)circle(2pt)--(90:2)circle(2pt)--(210:2)circle(2pt)--(-30:2);
\draw (0,-1.5) node[below] {$B_3$};
\end{tikzpicture}
\caption{\label{b3}Triangular block on three vertices}
\end{figure}

Since we assume there is no small block sets with less than $7$ vertices in $G$, at least one edge of $B_3$ is an interior edge. Thus,
\begin{align*}
f_B&\leq 1+{3\over 4}={7\over 4}\\
{41\over 72}e_B^\partial+{11\over 18}e_B^{int}&\geq 2\cdot {41\over 72}+{11\over 18}={7\over 2}\\
f_B&\leq {41\over 72}e_B^\partial+{11\over 18}e_B^{int}.
\end{align*}

\textbf{Case 3: }$B$ is $B_{4,a}$.
\begin{figure}
\centering
\begin{tikzpicture}
\draw[fill,color=gray!10] (210:2)--(90:2)--(30:4)--(-30:2)--cycle;
\draw[fill](210:2)circle(2pt);
\draw[fill](90:2)circle(2pt);
\draw[fill](30:4)circle(2pt);
\draw[fill](-30:2)circle(2pt);
\draw (210:2)--(90:2)--(30:4)--(-30:2)--cycle;
\draw(90:2)--(-30:2);
\draw (0,-1.5) node[below] {$B_{4,a}$};
\end{tikzpicture}
\begin{tikzpicture}
\draw[fill,color=gray!10] (-30:2)--(90:2)--(210:2)--cycle;
\draw[fill] (-30:2)circle(2pt)--(90:2)circle(2pt)--(210:2)circle(2pt)--(-30:2);
\draw (0,-1.5) node[below] {$B_{4,b}$};
\draw[fill] (0,0)circle(2pt)--(-30:2);
\draw (0,0)--(90:2);
\draw (0,0)--(210:2);
\end{tikzpicture}

\caption{\label{b4}Triangular blocks on four vertices}
\end{figure}
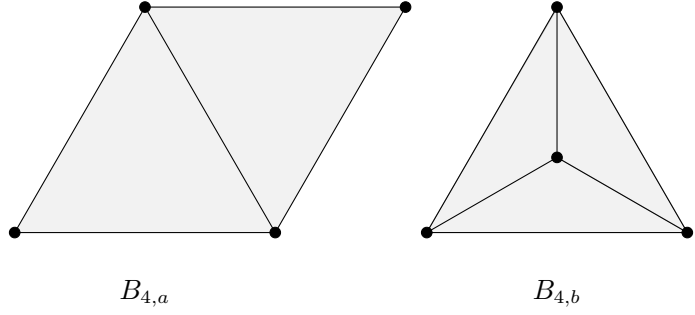

Similar to Case 2, at least one of the edge on the boundary of $B$ has to be an exterior edge. Hence,
\begin{align*}
2\leq e_B^{int}&\leq 5\\
f_B&\leq 2+{1\over 4}(e_B^{int}-1)+{1\over 8}e_B^\partial \\
&=2+{1\over 4}(e_B^{int}-1)+{1\over 8}(5-e_B^{int})={19+e_B^{int}\over 8}\\
{41\over 72}e_B^\partial+{11\over 18}e_B^{int}&={41\over 72}(5-e_B^{int})+{11\over 18}e_B^{int}={205-6e_B^{int}\over 72}+{e_B^{int}\over 8}.
\end{align*}
Since $e_B^{int}\leq 5$, we know ${205-6e_B^{int}\over 72}>{19\over 8}$. Hence ${205-6e_B^{int}\over 72}+{e_B^{int}\over 8}>{19+e_B^{int}\over 8}$ and $f_B\leq {41\over 72}e_B^\partial+{11\over 18}e_B^{int}.$

\textbf{Case 4: }$B$ is $B_{4,b}$.

If at most one of exterior edges of $B$ is adjacent to a face of length $4$, then
\begin{align*}
f_B&\leq 3+{1\over 4}+{1\over 5}(e_B^{int}-4)+{1\over 8}(6-e_B^{int})={3\over 40}e_B^{int}+{16\over 5}\\
{41\over 72}e_B^\partial+{11\over 18}e_B^{int}&={41\over 72}(6-e_B^{int})+{11\over 18}e_B^{int}={1\over 24}e_B^{int}+{41\over 12}\\
4\leq e_B^{int}&\leq 6\\
\Rightarrow {3\over 40}e_B^{int}+{16\over 5}&<{1\over 24}e_B^{int}+{41\over 12}\\
\Rightarrow f_B&\leq {41\over 72}e_B^\partial+{11\over 18}e_B^{int}.
\end{align*}
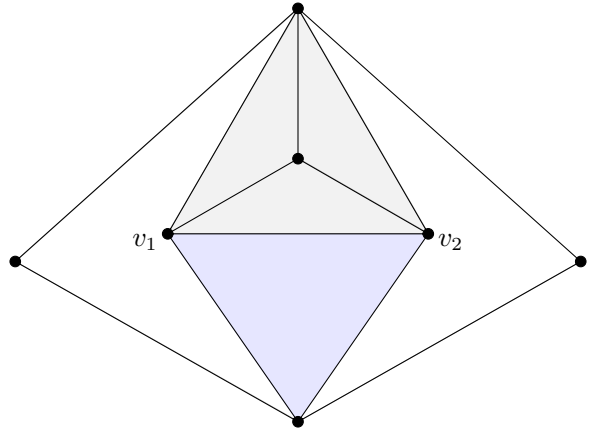
\begin{figure}
\centering
\begin{tikzpicture}
\draw[fill,color=gray!10] (-30:2)--(90:2)--(210:2)--cycle;
\draw[fill,color=blue!10] (-30:2)--(210:2)--(-90:3.5);
\draw[fill] (-30:2)circle(2pt)--(90:2)circle(2pt)--(210:2)circle(2pt)--(-30:2);

\draw[fill] (0,0)circle(2pt)--(-30:2);
\draw (0,0)--(90:2);
\draw (0,0)--(210:2);
\df (90:2)--(200:4)circle(2pt)--(-90:3.5)circle(2pt)--(-20:4)circle(2pt);
\draw (-20:4)--(90:2);
\draw (-30:2)node[anchor=160]{$v_2$}--(-90:3.5)--(210:2)node[anchor=20]{$v_1$};

\end{tikzpicture}
\caption{\label{b4b}When two boundary edges of $B_{4,b}$ are adjacent to some face of length $4$(up to symmetry)}
\end{figure}
Remember that $G$ is $2$-connected, every pair of adjacent vertices have degree sum at least $7$, $\delta(G)\geq 3$, and any $C_7$ of $G$ has to contain an $(8+)$-face in its interior. Therefore, if more than one of exterior edges of $B$ is adjacent to a face of length $4$, there is only one possible drawing, as shown in Figure \ref{b4b}. The blue area is not a face and there are other vertices inside it. Observe that in this case, edge $(v_1v_2)$ has to be adjacent to an $(8+)$-face in the blue area. Hence,
\begin{align*}
f_B&\leq 3+{2\over 4}+{1\over 8}={29\over 8}\\
{41\over 72}e_B^\partial+{11\over 18}e_B^{int}&={41\over 72}+{11\over 18}\cdot 5={29\over 8}\\
f_B&\leq {41\over 72}e_B^\partial+{11\over 18}e_B^{int}.
\end{align*}

Notice that there is another tricky graph of two boundary edges of $B_{4,b}$ adjacent to some face of length $4$ that could not exist. This graph appears because we allow $C_7$ with an $(8+)$-face inside. As shown in Figure \ref{b4n} where the blue area is not a face and contains $(8+)$-faces inside, the only possible $C_7$ must contain an $(8+)$-face. However, any of the red edge could not belong to a $T_3$ block, so they are all original edges of $G'$ and should not form a $C_7$.

\begin{figure}
\centering
\begin{tikzpicture}

\draw[fill,color=gray!10] (-30:2)--(90:2)--(210:2)--cycle;
\draw[fill,color=blue!10] (135:4)--(150:1.8)--(90:2)--cycle;
\draw[fill] (-30:2)circle(2pt)--(90:2)circle(2pt)--(210:2)circle(2pt)--(-30:2);
\draw (0,-1.5) node[below] {$B_{4,b}$};
\draw[fill] (0,0)circle(2pt)--(-30:2);
\draw (0,0)--(90:2);
\draw (0,0)--(210:2);
\df (-30:2)--(45:4)circle(2pt)--(135:4)circle(2pt)--(210:2);
\df(135:4)--(150:1.8)circle(2pt)--(90:2);
\draw (135:4)--(90:2);
\draw[color=red,thick](135:4)--(150:1.8)--(90:2)--(0,0)--(210:2)--(-30:2)--(45:4)--cycle;

\end{tikzpicture}
\caption{\label{b4n}An impossible drawing}
\end{figure}
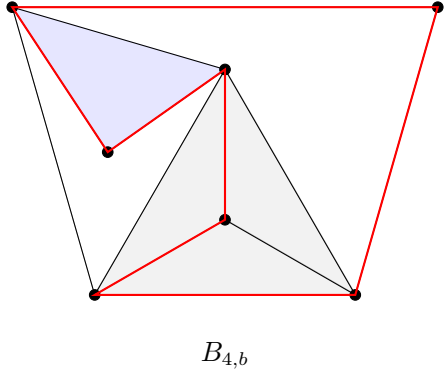
\textbf{Case 5: }$B$ is $B_{5,a}$.
\begin{figure}
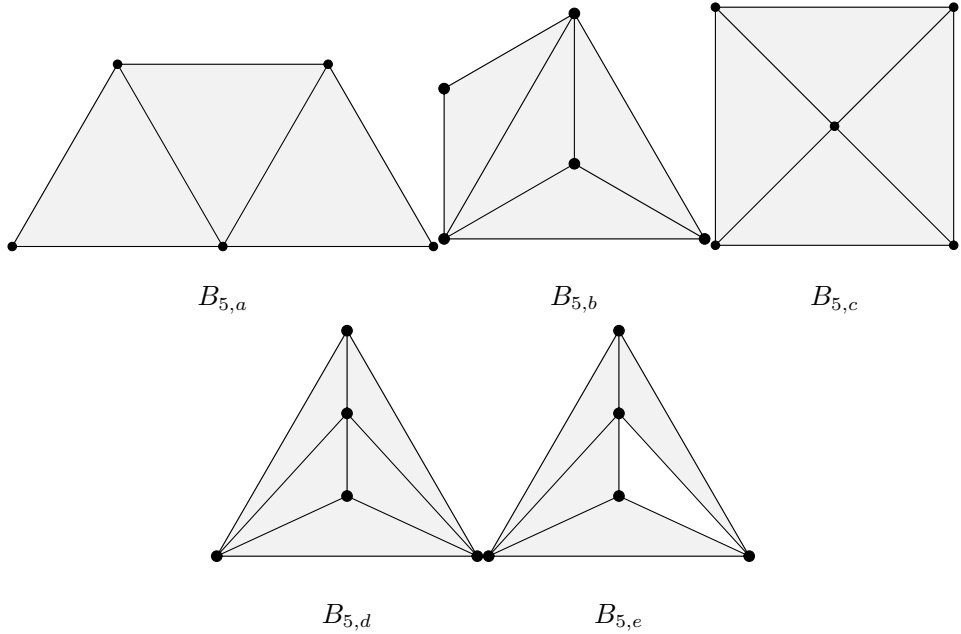

\centering
\bt[scale=0.8]
\dff (180:3.5)--(120:3.5)--(60:3.5)--(0:3.5)--cycle;
\df (0,0)circle(2pt)--(180:3.5)circle(2pt)--(120:3.5)circle(2pt)--(60:3.5)circle(2pt)--(0:3.5)circle(2pt)--(0,0);
\draw(120:3.5)--(0,0)--(60:3.5);
\draw (0,-0.5)node[below]{$B_{5,a}$};
\et
\bt
\draw[fill,color=gray!10] (-30:2)--(90:2)--(150:2)--(210:2)--cycle;
\draw[fill] (-30:2)circle(2pt)--(90:2)circle(2pt)--(210:2)circle(2pt)--(-30:2);
\df (90:2)--(150:2)circle(2pt)--(210:2);
\draw (0,-1.5) node[below] {$B_{5,b}$};
\draw[fill] (0,0)circle(2pt)--(-30:2);
\draw (0,0)--(90:2);
\draw (0,0)--(210:2);
\et
\bt[scale=0.8]
\dff (45:2.8)--(135:2.8)--(225:2.8)--(-45:2.8)--cycle;
\df (45:2.8)circle(2pt)--(135:2.8)circle(2pt)--(225:2.8)circle(2pt)--(-45:2.8)circle(2pt)--(45:2.8);
\df (45:2.8)--(0,0)circle(2pt)--(135:2.8);
\draw (225:2.8)--(0,0)--(-45:2.8);
\draw (0,-2.5) node[below]{$B_{5,c}$};

\et

\bt
\draw[fill,color=gray!10] (-30:2)--(90:2)--(210:2)--cycle;
\draw[fill] (-30:2)circle(2pt)--(90:2)circle(2pt)--(210:2)circle(2pt)--(-30:2);
\df (90:2)--(90:0.9)circle(2pt)--(90:-0.2)circle(2pt);
\draw (90:0.9)--(210:2)--(90:-0.2)--(-30:2)--cycle;
\draw (0,-1.5) node[below] {$B_{5,d}$};
\et
\bt
\draw[fill,color=gray!10] (-30:2)--(90:2)--(210:2)--cycle;
\draw[fill,color=white] (90:0.9)--(90:-0.2)--(-30:2)--cycle;
\draw[fill] (-30:2)circle(2pt)--(90:2)circle(2pt)--(210:2)circle(2pt)--(-30:2);
\df (90:2)--(90:0.9)circle(2pt)--(90:-0.2)circle(2pt);
\draw (90:0.9)--(210:2)--(90:-0.2)--(-30:2)--cycle;
\draw (0,-1.5) node[below] {$B_{5,e}$};
\et

\caption{\label{b5}Triangular blocks on five vertices}
\end{figure}
\begin{align*}
f_B&\leq 3+{1\over 4}(e_B^{int}-2)+{1\over 8}(7-e_B^{int})={27+e_B^{int}\over 8}\\
{41\over 72}e_B^\partial+{11\over 18}e_B^{int}&={41\over 72}e_B^{int}+{11\over 18}e_B^{int}={1\over 24}e_B^{int}+{287\over 72}.
\end{align*}
Since $e_B^{int}\leq 7$, we know ${1\over 24}e_B^{int}+{287\over 72}>{27+e_B^{int}\over 8}$. Hence $f_B\leq {41\over 72}e_B^\partial+{11\over 18}e_B^{int}.$

\textbf{Case 6: }$B$ is $B_{5,b}$.

\begin{figure}
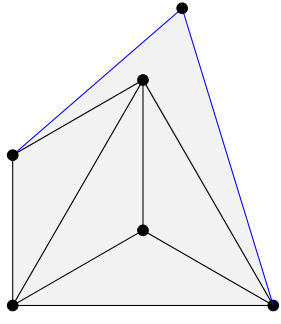

\centering

\bt
\draw[fill,color=gray!10] (-30:2)--(90:2)--(150:2)--(210:2)--cycle;
\dff (-30:2)--(80:3)--(150:2)--(90:2)--cycle;
\draw[fill] (-30:2)circle(2pt)--(90:2)circle(2pt)--(210:2)circle(2pt);
\draw[color=blue](-30:2)--(80:3)--(150:2);
\df(80:3)circle(2pt);
\df (90:2)--(150:2)circle(2pt);
\draw(150:2)--(210:2)--(-30:2);
\draw[fill] (0,0)circle(2pt)--(-30:2);
\draw (0,0)--(90:2);
\draw (0,0)--(210:2);
\et

\caption{\label{b5b}At least two boundary edge of $B_{5,b}$ adjacent to some face of length $4$}

\end{figure}
Given the assumptions of Lemma \ref{smallblockslemma}, if at least two boundary edges of $B_{5,b}$ is adjacent to some face of length $4$, then there is only one way to draw that(up to symmetry), as shown in Figure \ref{b5b}. Observe that under the assumptions of Lemma \ref{smallblockslemma}, each of the two blue edges has to bordor some face of length at least $6$, so we can unique assign them to $B$, together with the face of length $4$. In this case,
\begin{align*}
f_B&\leq 5+{1\over 8}e_B^\partial+{1\over 4}(4-e_B^\partial)=6-{1\over 8}e_B^\partial\\
{41\over 72}e_B^\partial+{11\over 18}e_B^{int}&={41\over 72}e_B^\partial+{11\over 18}(10-e_B^\partial)={55\over 9}-{1\over 24}e_B^\partial.
\end{align*}
Since $e_B^\partial\geq 0$, we know $6-{1\over 8}e_B^\partial\leq {55\over 9}-{1\over 24}e_B^\partial$ and $f_B\leq{41\over 72}e_B^\partial+{11\over 18}e_B^{int}$.

Now we can assume at most one edge of $B_{5,b}$ is adjacent to some face of length $4$. In this case,
\begin{align*}
f_B&\leq 4+{1\over 8}e_B^\partial+{1\over 4}+{1\over 5}(3-e_B^\partial)=6-{1\over 8}e_B^\partial={97\over 20}-{3\over 40}e_B^\partial\\
{41\over 72}e_B^\partial+{11\over 18}e_B^{int}&={41\over 72}e_B^\partial+{11\over 18}(8-e_B^\partial)={44\over 9}-{1\over 24}e_B^\partial.
\end{align*}
Since $e_B^\partial\geq 0$, we know ${97\over 20}-{3\over 40}e_B^\partial\leq {44\over 9}-{1\over 24}e_B^\partial$ and $f_B\leq{41\over 72}e_B^\partial+{11\over 18}e_B^{int}$.

\textbf{Case 7: }$B$ is $B_{5,c}$.

\begin{figure}
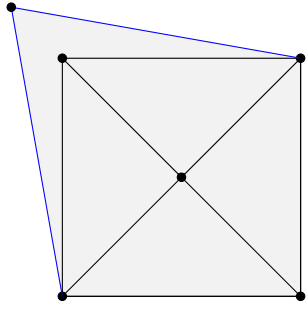

\centering

\bt[scale=0.8]

\dff (45:2.8)--(135:4)--(225:2.8)--(-45:2.8)--cycle;
\df (45:2.8)circle(2pt)--(135:2.8)circle(2pt)--(225:2.8)circle(2pt)--(-45:2.8)circle(2pt)--(45:2.8);
\draw[color=blue](45:2.8)--(135:4)--(225:2.8);
\df (135:4)circle(2pt);
\draw(45:2.8)--(-45:2.8)--(225:2.8);
\df (45:2.8)--(0,0)circle(2pt)--(135:2.8);
\draw (225:2.8)--(0,0)--(-45:2.8);
\et
\caption{\label{b5c}At least two boundary edge of $B_{5,c}$ adjacent to some face of length $4$}

\end{figure}
Given the assumptions of Lemma \ref{smallblockslemma}, if at least two boundary edges of $B_{5,c}$ are adjacent to some face of length $4$, then there is only one way to draw that(up to symmetry), as shown in Figure \ref{b5c}. Observe that under the assumptions of Lemma \ref{smallblockslemma}, each of the two blue edges is trivial block, so we can uniquely assign them to $B$, together with the face of length $4$. In this case,
\begin{align*}
f_B&\leq 5+{1\over 8}e_B^\partial+{1\over 4}(4-e_B^\partial)=6-{1\over 8}e_B^\partial\\
{41\over 72}e_B^\partial+{11\over 18}e_B^{int}&={41\over 72}e_B^\partial+{11\over 18}(10-e_B^\partial)={55\over 9}-{1\over 24}e_B^\partial.
\end{align*}
Since $e_B^\partial\geq 0$, we know $6-{1\over 8}e_B^\partial\leq {55\over 9}-{1\over 24}e_B^\partial$ and $f_B\leq{41\over 72}e_B^\partial+{11\over 18}e_B^{int}$.

Now we can assume at most one edge of $B_{5,b}$ is adjacent to some face of length $4$. In this case,
\begin{align*}
f_B&\leq 4+{1\over 8}e_B^\partial+{1\over 4}+{1\over 5}(3-e_B^\partial)=6-{1\over 8}e_B^\partial={97\over 20}-{3\over 40}e_B^\partial\\
{41\over 72}e_B^\partial+{11\over 18}e_B^{int}&={41\over 72}e_B^\partial+{11\over 18}(8-e_B^\partial)={44\over 9}-{1\over 24}e_B^\partial.
\end{align*}
Since $e_B^\partial\geq 0$, we know ${97\over 20}-{3\over 40}e_B^\partial\leq {44\over 9}-{1\over 24}e_B^\partial$ and $f_B\leq{41\over 72}e_B^\partial+{11\over 18}e_B^{int}$.

\textbf{Case 8: }$B$ is $B_{5,d}$.

\begin{figure}
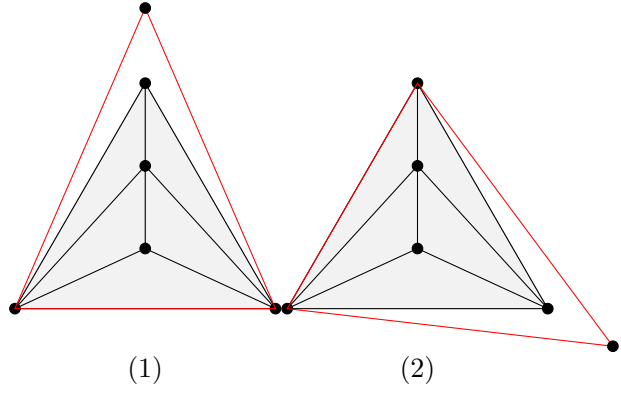

\centering
\bt
\draw[fill,color=gray!10] (-30:2)--(90:2)--(210:2)--cycle;
\draw[fill] (-30:2)circle(2pt)--(90:2)circle(2pt)--(210:2)circle(2pt)--(-30:2);
\df (90:2)--(90:0.9)circle(2pt)--(90:-0.2)circle(2pt);
\draw (90:0.9)--(210:2)--(90:-0.2)--(-30:2)--cycle;
\draw[color=red](90:3)--(210:2)--(-30:2)--cycle;
\draw(0,-1.5) node[below] {$(1)$};
\df(90:3)circle(2pt);
\et
\bt
\draw[fill,color=gray!10] (-30:2)--(90:2)--(210:2)--cycle;
\draw[fill] (-30:2)circle(2pt)--(90:2)circle(2pt)--(210:2)circle(2pt)--(-30:2);
\df (90:2)--(90:0.9)circle(2pt)--(90:-0.2)circle(2pt);
\draw (90:0.9)--(210:2)--(90:-0.2)--(-30:2)--cycle;
\draw[color=red](90:2)--(210:2)--(-30:3)--cycle;
\draw(0,-1.5) node[below] {$(2)$};
\df(-30:3)circle(2pt);
\et
\caption{\label{b5d}Two ways to draw $B_{5,d}$ while adjacent to a face of length $4$}
\end{figure}
Suppose $B$ is adjacent to some face of length $4$. To avoid $C_7$ without an $(8+)$-face inside, the only two possibly drawings are shown in Figure \ref{b5d}. Since $\delta(G)\geq 3$, we know the red edges are all adjacent to some $(8+)$-face. Thus in both drawings we have a small block set with $6$ vertices, which is not allowed. However, if $B$ is not adjacent to any face of length $4$, then it can only border $(8+)$-faces and we end up with a small block set with $5$ vertices and is also not allowed.

\textbf{Case 9: }$B$ is $B_{5,e}$.

Similar to Case 2,
\begin{align*}
0\leq e_B^\partial&\leq 6\\
f_G&\leq 4+{1\over 8}e_B^\partial+{1\over 4}(6-e_B^\partial)={11\over 2}-{1\over 8}e_B^\partial\\
{41\over 72}e_B^\partial+{11\over 18}e_B^{int}&={41\over 72}e_B^\partial+{11\over 18}(9-e_B^\partial)={11\over 2}-{1\over 24}e_B^\partial\geq {11\over 2}-{1\over 8}e_B^\partial\\
f_G&\leq{41\over 72}e_B^\partial+{11\over 18}e_B^{int}.
\end{align*}

\textbf{Case 10: }$B$ is $B_{6,a}$ or $B_{6,b}$.

\begin{figure}
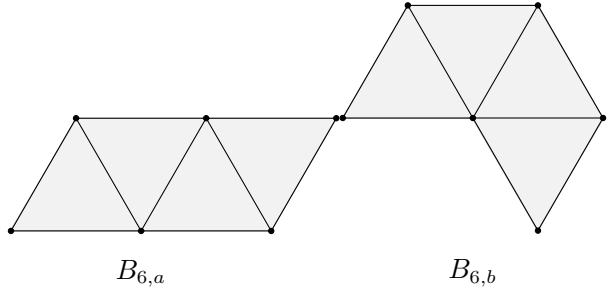

\centering
\bt[scale=0.5]
\coordinate(a) at(-3.46,0);
\coordinate(b) at(-1.73,3);
\coordinate(c)at(1.73,3);
\coordinate(d)at(5.19,3);
\coordinate(e)at(3.46,0);
\coordinate(f)at(0,0);
\dff (a)--(b)--(d)--(e)--cycle;
\df (a)circle(2pt)--(b)circle(2pt)--(c)circle(2pt)--(d)circle(2pt)--(e)circle(2pt)--(f)circle(2pt)--(a);
\draw (e)--(c)--(f)--(b);
\draw (0,-0.5) node[below]{$B_{6,a}$};
\et
\bt[scale=0.5]
\coordinate(a) at (180:3.46);
\coordinate(b) at (120:3.46);
\coordinate(c) at (60:3.46);
\coordinate(d) at (0:3.46);
\coordinate(e) at (-60:3.46);
\coordinate(f) at (0,0);
\dff (a)--(b)--(c)--(d)--(e)--(f)--cycle;
\df (a)circle(2pt)--(b)circle(2pt)--(c)circle(2pt)--(d)circle(2pt)--(e)circle(2pt)--(f)circle(2pt)--(a);
\draw (b)--(f)--(c);
\draw (f)--(d);
\draw (0,-3.5) node[below]{$B_{6,b}$};
\et
\caption{\label{b6b6}When $B$ has $6$ vertices and boundary forms a $C_6$}
\end{figure}
Similar to Case 9,

\begin{align*}
0\leq e_B^\partial&\leq 6\\
f_G&\leq 4+{1\over 8}e_B^\partial+{1\over 4}(6-e_B^\partial)={11\over 2}-{1\over 8}e_B^\partial\\
{41\over 72}e_B^\partial+{11\over 18}e_B^{int}&={41\over 72}e_B^\partial+{11\over 18}(9-e_B^\partial)={11\over 2}-{1\over 24}e_B^\partial\geq {11\over 2}-{1\over 8}e_B^\partial\\
f_G&\leq{41\over 72}e_B^\partial+{11\over 18}e_B^{int}.
\end{align*}

\textbf{Case 11: }$B$ is $B_{6,c},B_{6,d},B_{6,e},$ or $B_{6,f}$.

\begin{figure}
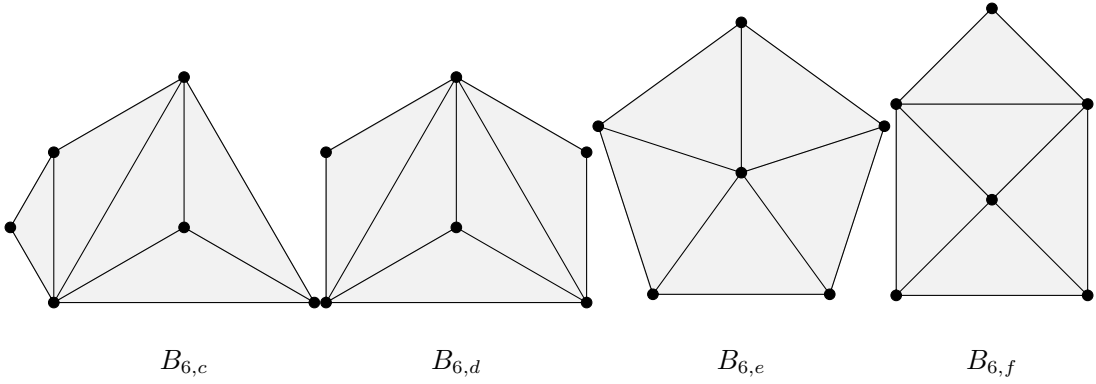

\centering
\bt
\coordinate(a)at(180:2.3094);
\coordinate(b)at(150:2);
\coordinate(c)at(90:2);
\coordinate(d)at(-30:2);
\coordinate(e)at(-150:2);
\coordinate(f)at(0,0);
\dff (a)--(b)--(c)--(d)--(e)--cycle;
\df (a)circle(2pt)--(b)circle(2pt)--(c)circle(2pt)--(d)circle(2pt)--(e)circle(2pt)--(a);
\df (d)--(f)circle(2pt)--(e);
\draw(b)--(e)--(c)--(f);
\draw (0,-1.5) node[below]{$B_{6,c}$};
\et
\bt
\coordinate(a)at(30:2);
\coordinate(b)at(150:2);
\coordinate(c)at(90:2);
\coordinate(d)at(-30:2);
\coordinate(e)at(-150:2);
\coordinate(f)at(0,0); 	
\dff (b)--(c)--(a)--(d)--(e)--cycle;
\df (b)circle(2pt)--(c)circle(2pt)--(a)circle(2pt)--(d)circle(2pt)--(e)circle(2pt)--(b);
\df (f)circle(2pt)--(e);
\draw (e)--(c)--(f)--(d)--(c);
\draw (0,-1.5) node[below]{$B_{6,d}$};
\et
\bt
\coordinate(a)at(18:2);
\coordinate(b)at(90:2);
\coordinate(c)at(162:2);
\coordinate(d)at(234:2);
\coordinate(e)at(306:2);
\coordinate(f)at(0,0);
\dff (a)--(b)--(c)--(d)--(e)--cycle;
\df (a)circle(2pt)--(b)circle(2pt)--(c)circle(2pt)--(d)circle(2pt)--(e)circle(2pt)--(a);
\df (a)--(f)circle(2pt)--(b);
\draw (e)--(f)--(c);
\draw (f)--(d);
\draw (0,-2.23)node[below]{$B_{6,e}$};
\et
\bt
\dff (45:1.8)--(135:1.8)--(225:1.8)--(-45:1.8)--cycle;
\dff (45:1.8)--(135:1.8)--(90:2.54558)--cycle;
\df (45:1.8)circle(2pt)--(135:1.8)circle(2pt)--(225:1.8)circle(2pt)--(-45:1.8)circle(2pt)--(45:1.8);
\df (45:1.8)--(0,0)circle(2pt)--(135:1.8);
\draw (225:1.8)--(0,0)--(-45:1.8);
\df(45:1.8)--(90:2.54558)circle(2pt)--(135:1.8);
\draw (0,-1.87) node[below]{$B_{6,f}$};
\et

\caption{\label{b6b5}When $B$ has $6$ vertices with boundary forms a $C_5$}
\end{figure}
In this case,
\begin{align*}
f_B&\leq 5+{1\over 8}e_B^\partial+{1\over 4}(5-e_B^\partial)={25\over 4}-{1\over 8}e_B^\partial\\
{41\over 72}e_B^\partial+{11\over 18}e_B^{int}&={41\over 72}e_B^\partial+{11\over 18}(10-e_B^\partial)={55\over 9}-{1\over 24}e_B^\partial.
\end{align*}
Therefore, when $e_B^\partial\geq 2$, we have $f_B\leq {41\over 72}e_B^\partial+{11\over 18}e_B^{int}$. It suffices to show that $B$ has at least two edges adjacent to some $(8+)$-face.

Remember that $\delta(G)\geq 3$, each pair of adjacent vertices have degree sum at least $7$, and there is no $C_7$ without an $(8+)$-face inside. The key observation is that, if $B$ is adjacent to a face with length less than $8$ and the edges and vertices shared form a path on the boundary of $B$, then there is only one possibility to draw such a face, as shown in Figure \ref{b5p1}. However, in such embedding, the red edges all need to be adjacent to some $(8+)$-face, which leaves us a small block set with $6$ vertices and is not allowed.

Now assume every face of size less than $8$ incident to $B$ shares edges and vertices that do not form a path. If at most one of boundary edges of $B$ is adjacent to some $(8+)$-face, we know $B$ is adjacent to at least one face with size less than $8$. The boundary edges of $B$ not used by this face will be cut into at least $2$ paths and non-triangular faces adjacent to each of these paths has to be disjoint from each other. If a path has length $1$ then it has to be adjacent to an $(8+)$-face. If the path is longer than one, by the key observation above, any faces adjacent to it has to use disjoint parts on that path, which would leave us with a shorter path unused. Therefore there are at least $2$ edges of $B$ adjacent to some $(8+)$-face.

\begin{figure}
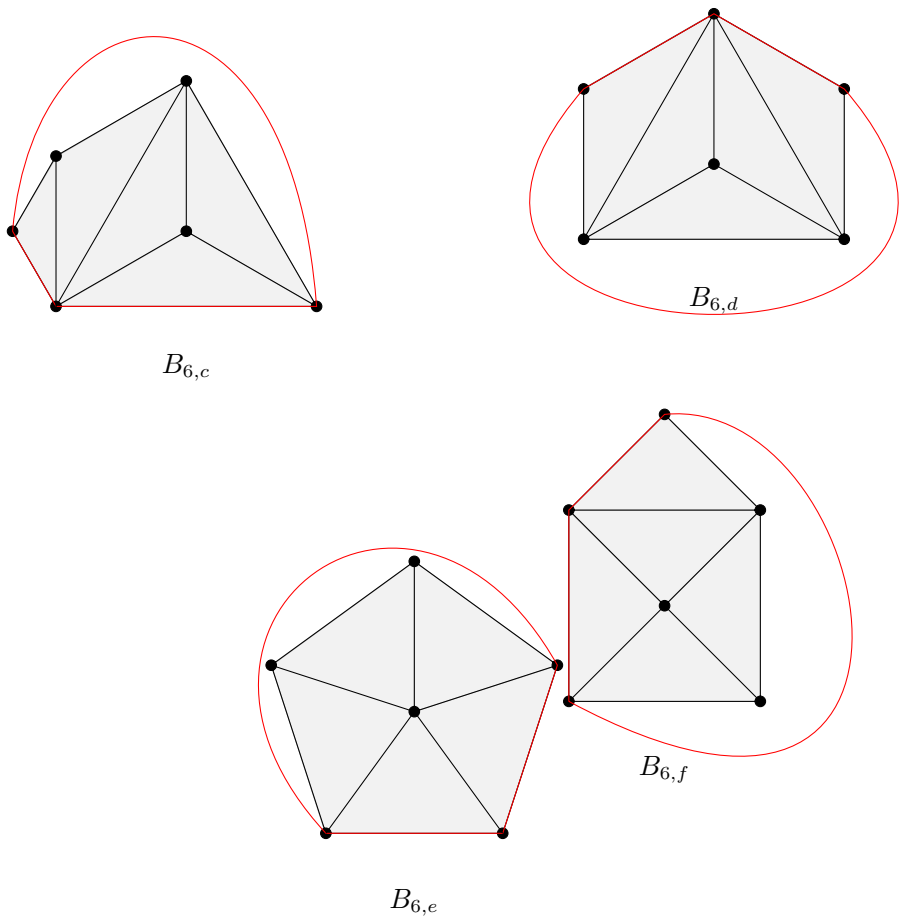

\centering
\bt
\coordinate(a)at(180:2.3094);
\coordinate(b)at(150:2);
\coordinate(c)at(90:2);
\coordinate(d)at(-30:2);
\coordinate(e)at(-150:2);
\coordinate(f)at(0,0);
\dff (a)--(b)--(c)--(d)--(e)--cycle;
\df (a)circle(2pt)--(b)circle(2pt)--(c)circle(2pt)--(d)circle(2pt)--(e)circle(2pt)--(a);
\df (d)--(f)circle(2pt)--(e);
\draw[color=red] (180:2.3094)..controls(120:4)and(70:4)..(-30:2)--(-150:2)--(180:2.3094);
\draw(b)--(e)--(c)--(f);
\draw (0,-1.5) node[below]{$B_{6,c}$};
\et
\bt
\coordinate(a)at(30:2);
\coordinate(b)at(150:2);
\coordinate(c)at(90:2);
\coordinate(d)at(-30:2);
\coordinate(e)at(-150:2);
\coordinate(f)at(0,0); 	
\dff (b)--(c)--(a)--(d)--(e)--cycle;

\df (b)circle(2pt)--(c)circle(2pt)--(a)circle(2pt)--(d)circle(2pt)--(e)circle(2pt)--(b);
\df (f)circle(2pt)--(e);
\draw (e)--(c)--(f)--(d)--(c);
\draw[color=red](150:2)..controls(-150:6)and(-30:6)..(30:2)--(90:2)--(150:2);
\draw (0,-1.5) node[below]{$B_{6,d}$};
\et

\bt
\coordinate(a)at(18:2);
\coordinate(b)at(90:2);
\coordinate(c)at(162:2);
\coordinate(d)at(234:2);
\coordinate(e)at(306:2);
\coordinate(f)at(0,0);
\dff (a)--(b)--(c)--(d)--(e)--cycle;
\df (a)circle(2pt)--(b)circle(2pt)--(c)circle(2pt)--(d)circle(2pt)--(e)circle(2pt)--(a);
\df (a)--(f)circle(2pt)--(b);
\draw[color=red](18:2)..controls(90:4)and(162:4)..(234:2)--(306:2)--(18:2);
\draw (e)--(f)--(c);
\draw (f)--(d);
\draw (0,-2.23)node[below]{$B_{6,e}$};
\et 	 	
\bt
\dff (45:1.8)--(135:1.8)--(225:1.8)--(-45:1.8)--cycle;
\dff (45:1.8)--(135:1.8)--(90:2.54558)--cycle;
\df (45:1.8)circle(2pt)--(135:1.8)circle(2pt)--(225:1.8)circle(2pt)--(-45:1.8)circle(2pt)--(45:1.8);
\df (45:1.8)--(0,0)circle(2pt)--(135:1.8);
\draw (225:1.8)--(0,0)--(-45:1.8);
\df(45:1.8)--(90:2.54558)circle(2pt)--(135:1.8);
\draw[color=red](225:1.8)..controls(-45:6)and(45:4)..(90:2.54558)--(135:1.8)--(225:1.8);
\draw (0,-1.87) node[below]{$B_{6,f}$};

\et
\caption{\label{b5p1}When not $(8+)$-face adjacent to $B$ shares only a path with $B$}
\end{figure}

\textbf{Case 12: }$B$ is $B_{6,g},B_{6,h},B_{6,i},B_{6,j}$, or $B_{6,k}$.

\begin{figure}
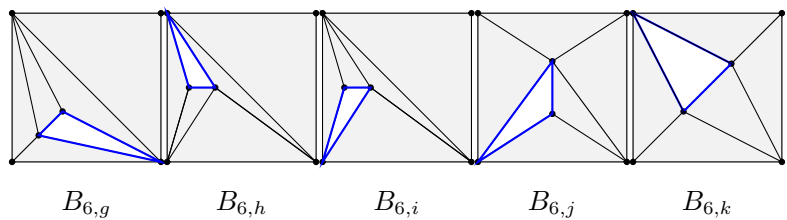

\centering
\bt[scale=0.5]
\coordinate (a) at (45:2.8);
\coordinate (b) at (135:2.8);
\coordinate (c) at (-135:2.8);
\coordinate (d) at (-45:2.8);

\dff(a)--(b)--(c)--(d)--cycle;
\coordinate (e) at (-135:0.9);
\coordinate (f) at (-135:1.8);
\draw[fill,color=white](d)--(e)--(f)--cycle;
\df (a)circle(2pt)--(b)circle(2pt)--(c)circle(2pt)--(d)circle(2pt)--(a);
\df (c)--(f)circle(2pt)--(e)circle(2pt)--(d);
\draw (d)--(b)--(e);
\draw (b)--(f)--(d);
\draw[color=blue,thick](d)--(e)--(f)--cycle;
\draw (0,-2.5)node[below] {$B_{6,g}$};
\et
\bt[scale=0.5]
\coordinate (a) at (45:2.8);
\coordinate (b) at (135:2.8);
\coordinate (c) at (-135:2.8);
\coordinate (d) at (-45:2.8);

\dff(a)--(b)--(c)--(d)--cycle;
\coordinate (e) at (180:0.7);
\coordinate (f) at (180:1.4);
\draw[fill,color=white](b)--(e)--(f)--cycle;
\df (a)circle(2pt)--(b)circle(2pt)--(c)circle(2pt)--(d)circle(2pt)--(a);
\df (c)--(f)circle(2pt)--(e)circle(2pt)--(d);
\draw (d)--(b)--(e)--(c)--(f)--(e)--(d);
\draw (b)--(f);
\draw[color=blue,thick](b)--(e)--(f)--cycle;
\draw (0,-2.5)node[below] {$B_{6,h}$};
\et
\bt[scale=0.5]
\coordinate (a) at (45:2.8);
\coordinate (b) at (135:2.8);
\coordinate (c) at (-135:2.8);
\coordinate (d) at (-45:2.8);

\dff(a)--(b)--(c)--(d)--cycle;
\coordinate (e) at (180:0.7);
\coordinate (f) at (180:1.4);
\draw[fill,color=white](c)--(e)--(f)--cycle;
\df (a)circle(2pt)--(b)circle(2pt)--(c)circle(2pt)--(d)circle(2pt)--(a);
\df (c)--(f)circle(2pt)--(e)circle(2pt)--(d);
\draw (d)--(b)--(e)--(c)--(f)--(e)--(d);
\draw (b)--(f);
\draw[color=blue,thick](c)--(e)--(f)--cycle;
\draw (0,-2.5)node[below] {$B_{6,i}$};
\et
\bt[scale=0.5]
\coordinate (a) at (45:2.8);
\coordinate (b) at (135:2.8);
\coordinate (c) at (-135:2.8);
\coordinate (d) at (-45:2.8);

\dff(a)--(b)--(c)--(d)--cycle;
\coordinate (e) at (90:0.7);
\coordinate (f) at (-90:0.7);
\draw[fill,color=white](c)--(e)--(f)--cycle;
\df (a)circle(2pt)--(b)circle(2pt)--(c)circle(2pt)--(d)circle(2pt)--(a);
\df (c)--(f)circle(2pt)--(e)circle(2pt)--(d);
\draw (d)--(f);
\draw (a)--(e)--(c);
\draw(b)--(e);
\draw[color=blue,thick](c)--(e)--(f)--cycle;
\draw (0,-2.5)node[below] {$B_{6,j}$};
\et
\bt[scale=0.5]
\coordinate (a) at (45:2.8);
\coordinate (b) at (135:2.8);
\coordinate (c) at (-135:2.8);
\coordinate (d) at (-45:2.8);

\dff(a)--(b)--(c)--(d)--cycle;
\coordinate (e) at (45:0.9);
\coordinate (f) at (225:0.9);
\draw[fill,color=white](b)--(e)--(f)--cycle;
\df (a)circle(2pt)--(b)circle(2pt)--(c)circle(2pt)--(d)circle(2pt)--(a);
\df (f)circle(2pt)--(e)circle(2pt)--(a);
\draw[color=blue,thick](b)--(e)--(f)--cycle;
\draw (c)--(f)--(b)--(e)--(d)--(f);
\draw (0,-2.5)node[below] {$B_{6,k}$};
\et
\caption{\label{b6b4}When $B$ has $6$ vertices, boundary forms a $C_4$ and a $C_3$}
\end{figure}
In this case,
\begin{align*}
f_B&\leq 5+{1\over 8}e_B^\partial+{1\over 5}(7-e_B^\partial)={27\over 4}-{1\over 8}e_B^\partial\\
{41\over 72}e_B^\partial+{11\over 18}e_B^{int}&={41\over 72}e_B^\partial+{11\over 18}(11-e_B^\partial)={121\over 18}-{1\over 24}e_B^\partial.
\end{align*}
Therefore, when $e_B^\partial\geq 1$, we have $f_B\leq {41\over 72}e_B^\partial+{11\over 18}e_B^{int}$. It suffices to show that $B$ has at least one edge adjacent to some $(8+)$-face. Notice that for each blue edge, it cannot be adjacent to another triangular face not contained in $B$, otherwise $B$ would be larger. However, if it is adjacent to a face of length between $4$ and $6$ which shares only two vertices with $B$, then we have a $C_7$ without an $(8+)$-face inside. Therefore the exterior face of $B$ adjacent to any blue edge has to use exactly $3$ vertices of $B$. However, it cannot use all three blue edges, otherwise $B$ would be larger. Hence the other blue edge not used by this face has to border an $(8+)$-face.

\textbf{Case 12: }$B$ is $B_{6,l},B_{6,m},B_{6,n},B_{6,o},B_{6,p},B_{6,q}$, or $B_{6,r}$.

\begin{figure}
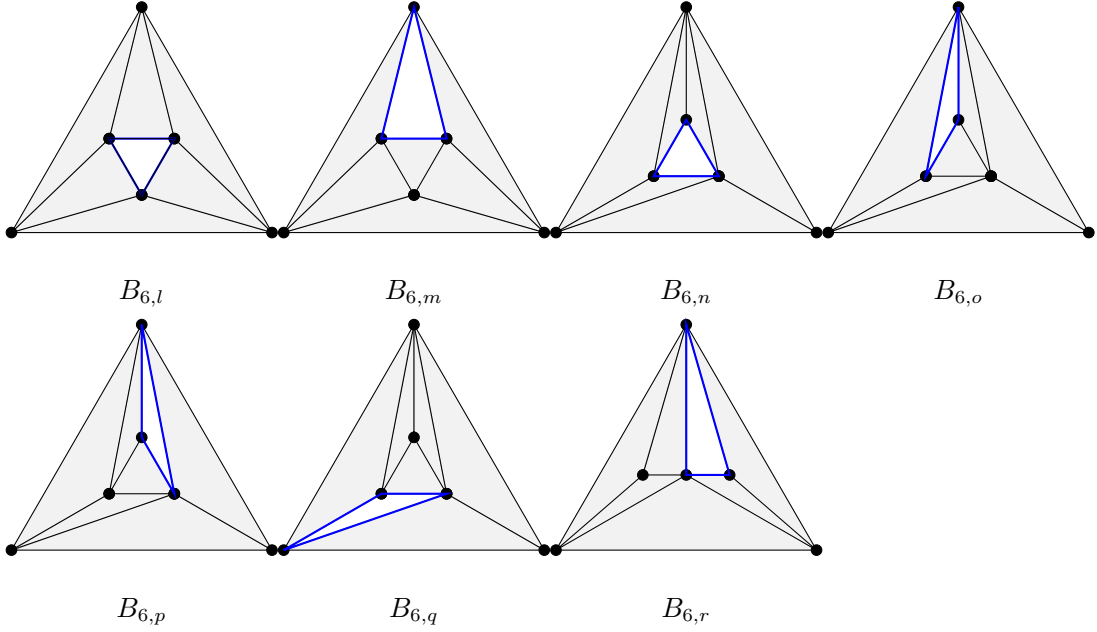

\bt
\dff (90:2)--(-30:2)--(-150:2)--cycle;
\draw[fill,color=white](30:0.5)--(-90:0.5)--(150:0.5)--cycle;
\draw[fill](90:2)circle(2pt);
\draw[fill](-30:2)circle(2pt);
\draw[fill](-150:2)circle(2pt);
\draw (90:2)--(-30:2)--(-150:2)--cycle;
\draw[fill](90:2)--(30:0.5)circle(2pt);
\draw[fill](90:2)--(150:0.5)circle(2pt);
\draw[fill](-30:2)--(30:0.5)circle(2pt);
\draw[fill](-30:2)--(-90:0.5)circle(2pt);
\draw[fill](-150:2)--(-90:0.5)circle(2pt);
\draw[fill](-150:2)--(150:0.5)circle(2pt);
\draw[color=blue,thick](30:0.5)--(-90:0.5)--(150:0.5)--cycle;
\draw (-90:0.5)--(150:0.5)--(30:0.5)--cycle;
\draw (0,-1.5)node[below] {$B_{6,l}$};
\et
\bt
\dff (90:2)--(-30:2)--(-150:2)--cycle;
\draw[fill,color=white](30:0.5)--(90:2)--(150:0.5)--cycle;
\draw[fill](90:2)circle(2pt);
\draw[fill](-30:2)circle(2pt);
\draw[fill](-150:2)circle(2pt);
\draw (90:2)--(-30:2)--(-150:2)--cycle;
\draw[fill](90:2)--(30:0.5)circle(2pt);
\draw[fill](90:2)--(150:0.5)circle(2pt);
\draw[fill](-30:2)--(30:0.5)circle(2pt);
\draw[fill](-30:2)--(-90:0.5)circle(2pt);
\draw[fill](-150:2)--(-90:0.5)circle(2pt);
\draw[fill](-150:2)--(150:0.5)circle(2pt);
\draw (-90:0.5)--(150:0.5)--(30:0.5)--cycle;
\draw[color=blue,thick](30:0.5)--(90:2)--(150:0.5)--cycle;
\draw (0,-1.5)node[below] {$B_{6,m}$};
\et
\bt
\dff (90:2)--(-30:2)--(-150:2)--cycle;
\draw[fill,color=white](90:0.5)--(-30:0.5)--(-150:0.5)--cycle;
\draw[fill](90:2)circle(2pt);
\draw[fill](-30:2)circle(2pt);
\draw[fill](-150:2)circle(2pt);
\draw (90:2)--(-30:2)--(-150:2)--cycle;
\draw[fill](90:2)--(-30:0.5)circle(2pt);
\draw[fill](90:2)--(-150:0.5)circle(2pt);
\draw[fill](90:2)--(90:0.5)circle(2pt);
\draw[fill](-30:2)--(-30:0.5)circle(2pt);
\draw[fill](-150:2)--(-150:0.5)circle(2pt);
\draw[fill](-150:2)--(-30:0.5)circle(2pt);
\draw (90:0.5)--(-150:0.5)--(-30:0.5)--cycle;
\draw[color=blue,thick](90:0.5)--(-30:0.5)--(-150:0.5)--cycle;
\draw (0,-1.5)node[below] {$B_{6,n}$};
\et
\bt
\dff (90:2)--(-30:2)--(-150:2)--cycle;
\draw[fill,color=white](90:0.5)--(90:2)--(-150:0.5)--cycle;
\draw[fill](90:2)circle(2pt);
\draw[fill](-30:2)circle(2pt);
\draw[fill](-150:2)circle(2pt);
\draw (90:2)--(-30:2)--(-150:2)--cycle;
\draw[fill](90:2)--(-30:0.5)circle(2pt);
\draw[fill](90:2)--(-150:0.5)circle(2pt);
\draw[fill](90:2)--(90:0.5)circle(2pt);
\draw[fill](-30:2)--(-30:0.5)circle(2pt);
\draw[fill](-150:2)--(-150:0.5)circle(2pt);
\draw[fill](-150:2)--(-30:0.5)circle(2pt);
\draw (90:0.5)--(-150:0.5)--(-30:0.5)--cycle;
\draw[color=blue,thick](90:0.5)--(90:2)--(-150:0.5)--cycle;
\draw (0,-1.5)node[below] {$B_{6,o}$};
\et
\\
\bt
\dff (90:2)--(-30:2)--(-150:2)--cycle;
\draw[fill,color=white](90:0.5)--(90:2)--(-30:0.5)--cycle;
\draw[fill](90:2)circle(2pt);
\draw[fill](-30:2)circle(2pt);
\draw[fill](-150:2)circle(2pt);
\draw (90:2)--(-30:2)--(-150:2)--cycle;
\draw[fill](90:2)--(-30:0.5)circle(2pt);
\draw[fill](90:2)--(-150:0.5)circle(2pt);
\draw[fill](90:2)--(90:0.5)circle(2pt);
\draw[fill](-30:2)--(-30:0.5)circle(2pt);
\draw[fill](-150:2)--(-150:0.5)circle(2pt);
\draw[fill](-150:2)--(-30:0.5)circle(2pt);
\draw (90:0.5)--(-150:0.5)--(-30:0.5)--cycle;
\draw[color=blue,thick](90:0.5)--(90:2)--(-30:0.5)--cycle;
\draw (0,-1.5)node[below] {$B_{6,p}$};
\et
\bt
\dff (90:2)--(-30:2)--(-150:2)--cycle;
\draw[fill,color=white](-30:0.5)--(-150:2)--(-150:0.5)--cycle;
\draw[fill](90:2)circle(2pt);
\draw[fill](-30:2)circle(2pt);
\draw[fill](-150:2)circle(2pt);
\draw (90:2)--(-30:2)--(-150:2)--cycle;
\draw[fill](90:2)--(-30:0.5)circle(2pt);
\draw[fill](90:2)--(-150:0.5)circle(2pt);
\draw[fill](90:2)--(90:0.5)circle(2pt);
\draw[fill](-30:2)--(-30:0.5)circle(2pt);
\draw[fill](-150:2)--(-150:0.5)circle(2pt);
\draw[fill](-150:2)--(-30:0.5)circle(2pt);
\draw (90:0.5)--(-150:0.5)--(-30:0.5)--cycle;
\draw[color=blue,thick](-30:0.5)--(-150:2)--(-150:0.5)--cycle;
\draw (0,-1.5)node[below] {$B_{6,q}$};
\et
\bt
\coordinate (a) at (0.57735,0);
\coordinate (b) at (-0.57735,0);
\dff (90:2)--(-30:2)--(-150:2)--cycle;
\draw[fill,color=white](90:2)--(a)--(0,0)--cycle;
\draw[fill](90:2)circle(2pt);
\draw[fill](-30:2)circle(2pt);
\draw[fill](-150:2)circle(2pt);
\draw (90:2)--(-30:2)--(-150:2)--cycle;
\draw[fill](90:2)--(0,0)circle(2pt);
\draw[fill](90:2)--(a)circle(2pt);
\draw[fill](90:2)--(b)circle(2pt);
\draw(0,0)--(-150:2)--(b)--(0,0)--(a)--(-30:2)--(0,0);
\draw[color=blue,thick](90:2)--(a)--(0,0)--cycle;
\draw (0,-1.5)node[below] {$B_{6,r}$};
\et
\caption{\label{b6b3}When $B$ has $6$ vertices, boundary forms two $C_3$'s}
\end{figure}
In this case,
\begin{align*}
f_B&\leq 5+{1\over 8}e_B^\partial+{1\over 5}(7-e_B^\partial)={27\over 4}-{1\over 8}e_B^\partial\\
{41\over 72}e_B^\partial+{11\over 18}e_B^{int}&={41\over 72}e_B^\partial+{11\over 18}(11-e_B^\partial)={121\over 18}-{1\over 24}e_B^\partial.
\end{align*}
Thus $f_B\leq {41\over 72}e_B^\partial+{11\over 18}e_B^{int}$ when $e_B^\partial\geq 2$. It can be easily verified that all three blue edges are adjacent to some $(8+)$-faces, so $e_B^\partial\geq 3$.

\end{proof}
\begin{lemma}\label{smallblocksonly}
Let $G'$ be a $2$-connected, plane graph on $n$ ($n\geq 8$) vertices with $\delta(G')\geq 3$ and with any adjacent vertices having total degree at least $7$. Let $G$ be the plane graph obtained from $G'$ by doing the replacement described in Lemma \ref{replacement}. Assume $G$ doesn't have $(8+)$-faces. Then, we have the following inequality:
    \begin{equation*}
        f(G) \leq {9\over 16}e(G).
    \end{equation*}

\end{lemma}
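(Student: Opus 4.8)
The plan is to reformulate the claimed inequality as a face-charging statement and prove it by discharging, exploiting the strong restrictions that the hypotheses (together with Lemma \ref{replacement} and Lemma \ref{adjacencylemma}) place on the triangular blocks of $G$.

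First I would pin down the possible face lengths. Since $G$ is $2$-connected every face is bounded by a cycle, and since $G$ is $C_7$-free a face of length $7$ would be a forbidden $C_7$; together with the assumption that $G$ has no $(8+)$-face this forces every face to have length in $\{3,4,5,6\}$ (the outer face included). Writing $f_i$ for the number of $i$-faces, the incidence counts $2e(G)=\sum_i i f_i$ and $f(G)=\sum_i f_i$ convert the target $f(G)\le \frac{9}{16}e(G)$ into the equivalent linear inequality
\begin{equation*}
5f_3\le 4f_4+13f_5+22f_6 ,
\end{equation*}
that is, $\sum_{F}(9|F|-32)\ge 0$, where a triangle carries deficit $-5$ while the $4$-, $5$-, $6$-faces carry surpluses $4,13,22$. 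The whole task is thus to show that the surplus of the non-triangular faces covers the deficit of the triangles.

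Next I would bring in the block structure. By the remark in the proof of Lemma \ref{replacement} there is no triangular block on more than $6$ vertices, and by property $(7)$ there are no large triangular blocks, so every triangular block of $G$ is one of the small types already enumerated in the proof of Lemma \ref{smallblockslemma}. Moreover Lemma \ref{adjacencylemma} shows that a $T_3$ (in any $2$-connected graph other than $T_3$ itself) is incident only to $(8+)$-faces; since $n\ge 7$ and $G$ has no $(8+)$-face, $G$ contains \emph{no} $T_3$, which removes the most triangle-dense offender. Every boundary edge of a surviving block therefore borders a $4$-, $5$- or $6$-face. The natural discharging rule is to send, from each non-triangular face $F$, the amount $(9|F|-32)/|F|=9-\tfrac{32}{|F|}\ge 1$ across each incident edge into the adjacent triangular block; this preserves the total charge, so it suffices to verify that the charge accumulated on the triangular blocks, after this transfer, is nonnegative.

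The subtle point, and the main obstacle, is that a single triangle can remain a net debtor: a lone $B_3$ surrounded by three $4$-faces receives only $3$ against its deficit of $5$, so a strictly block-by-block accounting fails. I would therefore balance at the level of the small blocks sets of Definition \ref{sbs} (in the absence of $(8+)$-faces these tie each debtor triangle to the neighbouring blocks with which it shares short faces), redistributing the leftover surplus of the $5$- and $6$-faces toward the debtor triangles. The heart of the argument is then a finite case analysis, parallel to Cases 1--12 of Lemma \ref{smallblockslemma}, showing that any local configuration which would leave a cluster of blocks a net debtor is exactly one excluded by the hypotheses: it either produces a small blocks set on fewer than $7$ vertices whose boundary is a single cycle (forbidden by property $(6)$ of Lemma \ref{replacement}), or forces two adjacent degree-$3$ vertices (violating the degree-sum-$\ge 7$ condition), or builds a $C_7$ with no $(8+)$-face in its interior (forbidden by property $(4)$). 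The $4$-faces are the delicate case, since they carry the least surplus; ruling out clusters of triangles surrounded predominantly by $4$-faces is where the degree conditions and the ``no small blocks set on fewer than $7$ vertices'' property must be used most carefully, and completing this check is the crux of the proof.
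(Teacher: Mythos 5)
Your reduction of the inequality to $\sum_F(9|F|-32)\ge 0$ is correct, and your observations that every face of $G$ has length in $\{3,4,5,6\}$ and that Lemma \ref{adjacencylemma} excludes $T_3$ blocks are both right and consistent with how the paper argues. But the proposal has a genuine gap, and it is not only that the ``finite case analysis'' you defer to is the entire content of the lemma. Your discharging rule --- each non-triangular face $F$ sends $9-\tfrac{32}{|F|}$ across each incident edge --- is quantitatively too weak to be repaired by any case analysis at the block level. Take $B=B_{4,b}$ (the $K_4$ block): it contains three triangular faces, hence carries deficit $15$, but it has only three boundary edges, so under your rule it receives at most $3\cdot\bigl(9-\tfrac{32}{6}\bigr)=11<15$ \emph{no matter what} faces surround it. So a second round of redistribution, pulling surplus from faces not incident to the debtor block, is unavoidable, and the proposal only gestures at this. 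Worse, the unit you choose for that second round --- the small blocks sets of Definition \ref{sbs} --- degenerates in exactly this setting: since $G$ has no $(8+)$-face, every exterior face of every block has length less than $7$, so the closure in Definition \ref{sbs} absorbs every triangular block and the unique small blocks set is all of $G$. There is no finite local configuration left to enumerate, so the contradictions you plan to invoke (property $(6)$, the degree-sum condition, property $(4)$) have nothing local to bite on.

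The paper's route is different in a way that matters here: it keeps the per-block accounting $f_B\le\tfrac{9}{16}e_B$ with $f_B=\sum_{e\in B}f^*(e)$ (which distributes each face's contribution over its incident edges rather than concentrating a deficit on the triangles), assigns certain short faces and their trivial-block edges uniquely to the adjacent non-trivial block to fix the debtor cases, and then ``expands'' outward from a problematic block to show that the configuration forces an $(8+)$-face (contradicting the hypothesis), a cut vertex, or a $C_7$ none of whose edges lies in an added $T_3$. That expansion-to-contradiction mechanism is what replaces your small-blocks-set clustering, and it is the piece your proposal is missing. To salvage your approach you would need to specify a concrete second-stage transfer (or a face-assignment scheme equivalent to the paper's) and then actually verify the resulting inequality, or derive a contradiction, for each of the block types $B_2$ through $B_{6,r}$.
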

Note that this is a much stronger result than Lemma \ref{smallblockslemma}.
\begin{proof}
Just like the proof of Lemma \ref{smallblockslemma}, we will have to run through all the different small triangular block cases, and show that the face and edge contributions satisfies the inequality. What makes this proof easier will be the fact that we can ``expand" out block to include its exterior faces, to show that either exterior faces of the block are large enough such that the inequality is satisfied; or that we eventually get to a point where the (expanded) block can only border $(8+)$-faces. Then it follows that either all of these $(8+)$-faces are the same face, then $G$ has a cut vertex, or there are $2$ or more $(8+)$-faces, both of which are contradictions. \\

We will apply this technique to the most difficult small triangular block, the $B_{4,b}$, and we note that for all the other blocks, similar calculations and reasoning can be used. We also showcase the calculation of the trivial block ($B_2$), and a typical small triangular block with an ``inner boundary" ($B_{6,i}$). Denote $B$ as the triangular block we will study. Similar to our proof of Lemma \ref{smallblockslemma}, we use blue area to represent a shape that is not a face and contains vertices inside. If we draw a $C_7$ with red edges, it means each of the edge could not be part of a $T_3$. Thus these red edges are not added through the process described in Lemma \ref{replacement} and should not form a $C_7$.

\textbf{Case 1: }$B$ is $B_2$.
\begin{align*}
f_B&\leq {1\over 4}+{1\over 4}={1\over 2}\\
{9\over 16}e_B&={9\over 16}\\
f_B&\leq {9\over 16}.
\end{align*}

\textbf{Case 2: }$B$ is $B_{6,i}$.

As explained in Case 12 of the proof of Lemma \ref{smallblockslemma}, $e_B^\partial\geq 1$ so there is an $(8+)$-face.

\textbf{Case 3: }$B$ is $B_{4,b}$.

Similar to Case 4 of the proof of Lemma \ref{smallblockslemma}, the only possible drawing (up to symmetry) where two boundary edges of $B$ bordering some face of length $4$ lead to $e_B^\partial\geq 1$.

If exactly one edge of $B$ is adjacent to some face of length $4$, the we claim that all other three edges of that $4$-face is trivial blocks so this face can be uniquely assigned to $B$. The trick to verify this (and to enumerate similar cases) is that, if we don't allow $C_7$ without an $(8+)$-face inside, then some vertices have to be ``used twice" to eliminate this possibility. For example, if any of the other three edges of that $4$-face is not trivial block, as shown in Figure \ref{samever}, the red vertices at the left have to be the same, so the graph looks like the right side.  In this case, the red edge has to be incident to an $(8+)$-face. Otherwise, either $B$ should be larger, or we have a $C_7$ where each edges doesn't belong to some $T_3$.

Using the trick about a vertex ``used twice", we can enumerate all possible drawings when an edge of the $4$-face in $B$ borders another $4$-face as, as shown in Figure \ref{2f4}. In all these drawings, either we find a $C_7$ where each edge doesn't belong to some $T_3$, or we find one of the red edges has to be border an $(8+)$-face(represented by red edges that does not form a $C_7$). Specifically, in $(1)$, if none of the red edges are adjacent to $(8+)$-face, then in order to avoid forming a $C_7$ in $B$, they must be in the same $6$-face, which would form a $C_7$ with the white $4$-face.

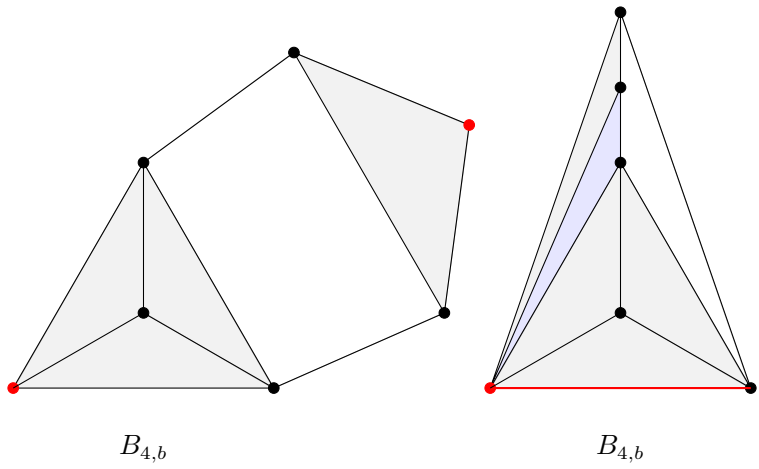
\begin{figure}
\centering
\begin{tikzpicture}
\draw[fill,color=gray!10] (-30:2)--(90:2)--(210:2)--cycle;
\dff (30:5)--(60:4)--(0:4)--cycle;
\draw[fill] (-30:2)circle(2pt)--(90:2)circle(2pt)--(210:2);
\draw (-30:2)--(210:2);
\draw (0,-1.5) node[below] {$B_{4,b}$};
\draw[fill] (0,0)circle(2pt)--(-30:2);
\df (90:2)--(60:4)circle(2pt)--(30:5);
\df (30:5)--(0:4)circle(2pt)--(-30:2);
\draw(60:4)--(0:4);
\draw[fill,color=red](30:5)circle(2pt);
\draw[fill,color=red](210:2)circle(2pt);
\draw (0,0)--(90:2);
\draw (0,0)--(210:2);
\end{tikzpicture}
\begin{tikzpicture}
\draw[fill,color=gray!10] (-30:2)--(90:2)--(210:2)--cycle;
\dff (90:3)--(90:4)--(210:2)--cycle;
\draw[fill,color=blue!10](90:3)--(90:2)--(210:2)--cycle;
\draw[fill] (-30:2)circle(2pt)--(90:2)circle(2pt)--(210:2);
\draw (-30:2)--(210:2);
\draw (0,-1.5) node[below] {$B_{4,b}$};
\draw[fill] (0,0)circle(2pt)--(-30:2);
\df (90:2)--(90:3)circle(2pt)--(90:4)circle(2pt)--(-30:2);
\draw (90:3)--(210:2)--(90:4);
\dfr (-30:2)--(210:2);

\draw[fill,color=red](210:2)circle(2pt);
\draw (0,0)--(90:2);
\draw (0,0)--(210:2);
\end{tikzpicture}
\caption{\label{samever}The red vertices are the same vertex}
\end{figure}

Now when we assign the face of length $4$ to $B$, we can assume that each of its boundary edges has to border a face with length at least $5$. Then,
\begin{align*}
f_B&\leq 4+{5\over 5}=5\\
{9\over 16}e_B&={81\over 16}>5\\
f_B&\leq {9\over 16}e_B.
\end{align*}

\begin{figure}
\centering
\begin{tikzpicture}[scale=0.8]
\draw[fill,color=gray!10] (-30:2)--(90:2)--(210:2)--cycle;

\dff(90:2)--(30:3)--(30:2)--(-30:2)--cycle;

\dfb(30:3)--(30:2)--(-30:2)--cycle;
\dfr(30:3)--(30:2)--(-30:2);
\df(90:2)--(30:3)circle(2pt)--(-30:2);
\df(-30:2)--(30:4.5)circle(2pt);
\draw(30:4.5)--(90:2);
\df(30:2)circle(2pt);

\draw[fill] (-30:2)circle(2pt)--(90:2)circle(2pt)--(210:2)circle(2pt)--(-30:2);
\draw (0,-1.5) node[below] {$(1)$};
\draw[fill] (0,0)circle(2pt)--(-30:2);

\draw (0,0)--(90:2);
\draw (0,0)--(210:2);
\end{tikzpicture}
\begin{tikzpicture}[scale=0.8]
\draw[fill,color=gray!10] (-30:2)--(90:2)--(210:2)--cycle;
\dff(90:2)--(15:2.5)--(30:4.5)--(-30:2)--cycle;
\dfb(15:2.5)--(40:2.8)--(30:4.5)--cycle;
\draw[fill] (-30:2)circle(2pt)--(90:2)circle(2pt)--(210:2)circle(2pt)--(-30:2);
\draw (0,-1.5) node[below] {$(2)$};
\draw[fill] (0,0)circle(2pt)--(-30:2);
\draw (0,0)--(90:2);
\draw (0,0)--(210:2);
\dfr (90:2)--(210:2)--(0,0)--(-30:2)--(30:4.5)--(40:2.8)--(15:2.5)--cycle;
\df(90:2)--(30:4.5)circle(2pt);
\df(15:2.5)circle(2pt)--(30:4.5);
\df(40:2.8)circle(2pt);
\end{tikzpicture}
\begin{tikzpicture}[scale=0.8]
\draw[fill,color=gray!10] (-30:2)--(90:2)--(210:2)--cycle;
\dff(90:2)--(30:4.5)--(40:2)--(-30:2)--cycle;
\dfb(40:2)--(15:2.2)--(-30:2)--cycle;
\draw[fill] (-30:2)circle(2pt)--(90:2)circle(2pt)--(210:2)circle(2pt)--(-30:2);
\draw (0,-1.5) node[below] {$(3)$};
\draw[fill] (0,0)circle(2pt)--(-30:2);
\draw (0,0)--(90:2);
\draw (0,0)--(210:2);
\dfr(90:2)--(0,0)--(210:2)--(-30:2)--(15:2.2)--(40:2)--(30:4.5)--cycle;
\df(40:2)circle(2pt)--(-30:2);
\df(30:4.5)circle(2pt)--(-30:2);
\df(15:2.2)circle(2pt);
\end{tikzpicture}

\begin{tikzpicture}
\draw[fill,color=gray!10] (-30:2)--(90:2)--(210:2)--cycle;
\dff (-30:2)--(45:4)--(135:4)--(90:2)--cycle;
\dfb(90:2)--(150:2)--(210:2)--cycle;

\draw[fill] (-30:2)circle(2pt)--(90:2)circle(2pt)--(210:2)circle(2pt)--(-30:2);
\dfr(90:2)--(150:2)--(210:2)--(0,0)--(-30:2)--(45:4)--(135:4)--cycle;
\df(45:4)circle(2pt);
\df(135:4)circle(2pt)--(210:2);
\df(150:2)circle(2pt);
\draw (0,-1.5) node[below] {$(4)$};
\draw[fill] (0,0)circle(2pt);
\draw (0,0)--(90:2);

\end{tikzpicture}
\begin{tikzpicture}
\draw[fill,color=gray!10] (-30:2)--(90:1)--(210:2)--cycle;
\dff (-30:2)--(45:4)--(150:2)--(90:1)--cycle;
\dfb(90:1)--(150:2)--(210:2)--cycle;

\draw[fill] (-30:2)circle(2pt)--(90:1)circle(2pt)--(210:2)circle(2pt)--(-30:2);
\dfr(90:1)--(150:2)--(45:4)--(135:4)--(210:2)--(-30:2)--(0:0)--cycle;
\df(45:4)circle(2pt);
\df(135:4)circle(2pt);
\draw(150:2)--(210:2);
\df(150:2)circle(2pt);
\draw (0,-1.5) node[below] {$(5)$};
\draw[fill] (0,0)circle(2pt);
\draw(0,0)--(210:2);
\draw (0,0)--(90:1);
\draw(45:4)--(-30:2);

\end{tikzpicture}

\caption{\label{2f4}An edge of $4$-face borders another $4$-face}
\end{figure}
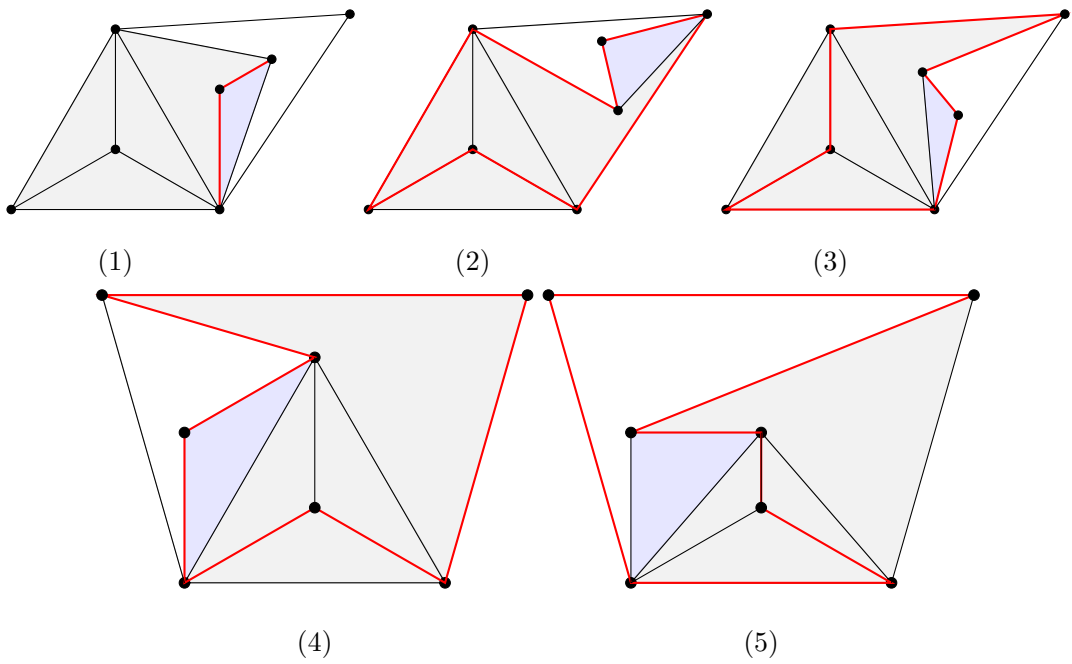
\end{proof}

\section{Proof of Theorem \ref{pt}}
\begin{proof}
Let $G'$ be a $2$-connected, $C_7$-free plane graph on $n$ $(n\geq 8)$ vertices with $\delta(G')\geq 3$ and any adjacent vertices having degree sum at least $7$. We will prove $e(G')\leq {18\over 7}n-{48\over 7}$ in the follow way: by Lemma \ref{replacement}, we can use $T_3$ to replace large triangular blocks, edges incident to $2$ $(8+)$-faces, and small block sets with less than $7$ vertices to get graph $G$ and it suffices to prove $e(G)\leq {18\over 7}v(G)-{48\over 7}$.
\\

Denote $a_i$ as the number of faces in $G$ of length $i$, where $i \geq 8$. For simplicity of notations, denote $$k_1 = \sum_{B \in S(G)} e_B^\partial,k_2=\sum_{B\in S(G)}e_B^{int}.$$ Now treating every $T_3$ and small block set as a single face, we obtain the dual graph denoted as $G^*$. By construction of $G$, no two $(8+)$-faces border each other. By Definition \ref{sbs} and Lemma \ref{adjacencylemma}, small block sets and $T_3$ only border $(8+)$-faces. Thus $G^*$ is a bipartite graph between set of vertices from $T_3$'s in $G$ and set of vertices from small block sets and $(8+)$-faces in $G$. Let $L$ be the number of $T_3$'s in $G$, we have \[ L = \frac{1}{3} ((\sum_{i = 8}^{\infty} ia_i) - k_1) .\] Denote $\hat{G^*}$ as $G^*$ with all double edges removed, and with all small block set vertices and incident edges removed. Clearly $\hat{G^*}$ must also be bipartite and that \[ v(\hat{G^*}) = (\sum_{i = 8}^{\infty} a_i) + L  = (\sum_{i = 8}^{\infty} a_i) + \frac{1}{3} ((\sum_{i = 8}^{\infty} ia_i) - k_1) .\] By property $(9)$ of Lemma \ref{replacement}, if $F$ is a face of $G$ with length $i$ and $i>8$, then $F$ is adjacent to at most $i-8$ $T_3$'s. Thus, if $v_F$ is the vertex in $\hat{G^*}$ coming from $F$, then $v_F$ has at most $i-8$ pairs of double edges into vertices from $T_3$'s of $G$. Hence the construction of $\hat{G^*}$ implies \[
e(\hat{G}^*) \geq (\sum_{i=8}^{\infty} a_i (i - (i-8))- k_1) = (8\sum_{i=8}^{\infty} a_i) - k_1.
\] \\
\textbf{Case 1:} $v(\hat{G^*})\geq 2$.
\\

If $e(\hat{G^*})=0$, clearly $$v(\hat{G}^*) \geq \frac{1}{2}e(\hat{G}^*) + 2.$$If $e(\hat{G^*})\geq 1$, denote $S_0$ as the set of isolated vertices of $\hat{G^*}$ and $S_1$ be the set of connected compoents of $\hat{G^*}$ each with at least $1$ edge, so $|S_1|\geq 1$. By Lemma \ref{replacement}, each $T_3$ must be adjacent to two disjoint $(8+)$-faces, so the vertex in $\hat{G^*}$ coming from this $T_3$ must have degree at least $2$. Therefore, for any $G_i\in S_1$, since $\hat{G^*}$ is bipartite, $G_i$ contains at least $1$ vertex coming from $T_3$ and $e(G_i)\geq 2$. Since $G_i$ is bipartite, it follows that
\begin{align*}
f(G_i)&\leq {1\over 2}e(G_i)\\
2+e(G_i)-v(G_i)&\leq {1\over 2}e(G_i)\\
v(G_i)&\geq {1\over 2}e(G_i)+2.
\end{align*}
Therefore,
\begin{align*}
v(\hat{G^*})&=\sum_{G_i\in S_1}v(\hat{G^*})\\
&\geq \sum_{G_i\in S_1}({1\over 2}e(G_i)+2)\\
&=\sum_{G_i\in S_1}{1\over 2}e(G_i)+2|S_1|\\
&\geq{1\over 2}e(\hat{G^*})+2.
\end{align*}
Hence we conclude that if $v(\hat{G^*})\geq 2$, then
\begin{align*}
v(\hat{G^*})&\geq {1\over 2}e(\hat{G^*})+2\\
(\sum_{i=8}^{\infty} a_i) + \frac{1}{3}(\sum_{i=8}^{\infty} ia_i - k_1) & \geq \frac{1}{2}(\sum_{i=8}^{\infty} 8a_i) - \frac{1}{2}k_1 + 2 \\
(\sum_{i=8}^{\infty} ia_i) - k_1 & \geq 9(\sum_{i=8}^{\infty} a_i) - \frac{3}{2}k_1 + 6
\end{align*}
\begin{equation}\label{eq1}
\sum_{i=8}^{\infty} a_i  \leq  \frac{1}{9}(\sum_{i=8}^{\infty} ia_i) + \frac{1}{18}k_1 - \frac{2}{3}.
\end{equation}

Now we go back to the primal graph $G$. Denote $f_k$ as the number of faces in the interiors of all small block sets. Then we see that
\begin{equation} \label{eq2}
\begin{split}
e(G) & = \text{\# edges in $T_3$ blocks + \# edges in all small block sets} \\
& = \frac{12}{3}((\sum_{i=8}^{\infty} ia_i) - k_1) + (k_1+k_2)\\
&=4\sum_{i=8}^\infty ia_i-3k_1+k_2
\end{split}
\end{equation}

\begin{equation} \label{eq3}
\begin{split}
f(G) & = \text{\# faces in the interior of $T_3$ blocks } \\
& + \text{  \# faces in the interior of small block sets } \\
& + \text{  \# of 8+faces} \\
& = \frac{7}{3}((\sum_{i=8}^{\infty} ia_i) - k_1) + (\sum_{i=8}^{\infty} a_i) + f_k
\end{split}
\end{equation}

Note that $\sum_{B\in S(G)} f_B = f_k + \text{face contribution on the boundary} = f_k + \frac{1}{8}k_1$, so lemma \ref{smallblockslemma} implies that:
\begin{equation} \label{eq4}
f_k \leq \frac{41}{72}\sum_{B\in S(G)} e_B^{\partial} + \frac{11}{18}\sum_{B\in S(G)} e_B^{\text{int}} - \frac{1}{8}k={4\over 9}k_1+{11\over 18}k_2.
\end{equation}

Now we combine Equations \ref{eq1}, \ref{eq2}, \ref{eq3}, \ref{eq4} together using Euler's formula for $G$:
\begin{align*}
v(G) & = 2 + e(G) - f(G) \\
& = 2+4\sum_{i=8}^\infty ia_i-3k_1+k_2-{7\over 3}\sum_{i=8}^\infty ia_i+{7\over 3}k_1-\sum_{i=8}^\infty a_i-f_k\\
&={5\over 3}\sum_{i=8}^\infty ia_i-\sum_{i=8}^\infty a_i-{2\over 3}k_1+k_2+2-f_k\\
&\geq {5\over 3}\sum_{i=8}^\infty ia_i-{2\over 3}k_1+k_2+2-{4\over 9}k_2-{11\over 18}k_2-{1\over 9}\sum_{i=8}^\infty ia_i-{1\over 18}k_2+{2\over 3}\\
&={14\over 9}\sum_{i=8}^\infty ia_i-{7\over 6}k_1+{7\over 18}k_2+{8\over 3}\\
{e(G)\over v(G)-{8\over 3}}&\leq {4\sum_{i=8}^\infty ia_i-3k_1+k_2\over {14\over 9}\sum_{i=8}^\infty ia_i-{7\over 6}k_1+{7\over 18}k_2}={18\over 7}\\
\Rightarrow e(G)&\leq {18\over 7}v(G)-{48\over 7}.
\end{align*}
\\
\textbf{Case 2:} $v(\hat{G^*})=1$.
\\
Observe that this vertex has to be a vertex coming from a $(8+)$-face in $G$ and the size of this face is just $k_1$. Similar to Case 1, we have
\begin{align*}
f(G)&\leq {4\over 9}k_1+{11\over 18}k_2\\
e(G)&=k_1+k_2\\
v(G)&=2+e(G)-f(G)\\
&\geq 2+k_1+k_2-{4\over 9}k_1-{11\over 18}k_2\\
&=2+{5\over 9}k_1+{7\over 18}k_2\\
{e(G)\over v(G)-{8\over 3}}&\leq {k_1+k_2\over 2+{5\over 9}l+{7\over 18}k_2-{8\over 3}}\\
&={k_1+k_2\over {5\over 9}k_1-{2\over 3}+{7\over 18}k_2}.
\end{align*}
Since ${k_2\over {7\over 18}k_2}={18\over 7}$, it suffices to prove ${k_1\over {5\over 9}k_1-{2\over 3}}\leq {18\over 7}$. Notice that $k_1$ is the size of a $(8+)$-face, so $k_1\geq 8$. It follows that $7k_1\leq 10k_1-12,{k_1\over {5\over 9}k_1-{2\over 3}}\leq {18\over 7}$.
\\

\textbf{Case 3:} $v(\hat{G^*})=0$.
\\

If $\hat{G}^*$ were to be empty, then it must mean that $G$ consists only of small blocks and no $(8+)$-faces. Then by lemma \ref{smallblocksonly}, we get:
\begin{equation}
\begin{split}
f(G) & \leq {9\over 16}e(G) \\
2 + e(G) - v(G) & \leq {9\over 16}e(G) \\
e(G) & \leq \frac{16}{7}v(G) - \frac{32}{7} \\
& = \frac{18}{7}v(G) - (\frac{32}{7} + \frac{2}{7}v(G)) \\
& \leq \frac{18}{7}v(G) - (\frac{32}{7} + \frac{16}{7}) \text{   (as $v(G) \geq 8$)} \\
& = \frac{18}{7}v(G) - \frac{48}{7}.
\end{split}
\end{equation}
\end{proof}

\section{Proof of Theorem \ref{mt}}
\begin{proof}
Let $G$ be a $C_7$-free plane graph on $n$ vertices. For simplicity of notations, in this section we denote $e(G)$ as $e$. We will show that either $e(G)\leq {18\over 7}n-{48\over 7}$, which is equivalent with $18n-7e\geq 48$, or $n<60$. We will do the following operations:

$1)$ Delete $x$ s.t. $deg(x)\leq 2$;

$2)$ Delete $x,y$ s.t. $(xy)\in E(G)$ and $deg(x)+deg(y)\leq 6$.

When we no longer can do this operation, we get an induced plane subgraph $G'$ with $\delta(G')\geq 3$ and each adjacent vertices having degree sum at least $7$. Denote $|E(G')|=e',|V(G')|=v'$.
Each time we do operation $1)$ or $2)$, denote the number of vertices deleted as $v_d$ and the number of edges deleted as $e_d$. Observe that ${18v_d-7e_d\over v_d}\geq 0.5$. Thus we get $$18n-7e\geq 18n'-7e'+0.5(n-n').$$

In line with usual graph theoretic terminology, we call a maximal $2$-connected subgraph a \textbf{block}. Let $b$ be the total number of blocks of $G'$. Specifically, let $b_2,b_3,b_4,b_5,b_6$, and $b_7$ denote the number of blocks of size $2,3,4,5,6$, and $7$, respectively. Let $b_8$ denote the number of blocks of size at least $8$. Then we have $b=\sum_{i=2}^8b_i$.

Let's find a lowerbound of $18n-7e-18$ for blocks of size $n$. Since $n$ is fixed, we need to plug in the largest possible $e$, which ideally would be the number of edges in a triangulated plane graph.

When $n\geq 8$, by Theorem \ref{pt} we have $18n-7e-18\geq 48-18=30$.

When $n=7$, since any triangulated plane graph contains a $C_7$, we have $18n-7e-18\geq 18\cdot7-7\cdot14-18=10.$

When $n= 6$, as in $T_3$ we have $18n-7e-18\geq 18\cdot6-7\cdot12-18=6$.

When $n= 5$, as in $B_{5,d}$ we have $18n-7e-18\geq 18\cdot5-7\cdot9-18=9$.

When $n=4$, as in $B_{4,b}$ we have $18n-7e-18\geq 18\cdot4-7\cdot6-18=12$.

When $n=3$, as in $B_{3}$ we have $18n-7e-18\geq 18\cdot3-7\cdot3-18=15$.

When $n=2$, as in $B_{2}$ we have $18n-7e-18\geq 18\cdot2-7\cdot1-18=11$.
\\
\\

Denote the vertex number of the $i$th block as $n_i$ and edge number as $e_i$. Using the lowerbounds presented above, when $n'\neq n$, we have
\begin{align*}
18n'-7e'&\geq18(\sum_{i=1}^{b}n_i-(b-1))-7\sum_{i=1}^{b}e_i\\
&=\sum_{i=1}^b(18n_i-7e_i-18)+18\\
&\geq 30b_8+10b_7+6b_6+9b_5+12b_4+15b_3+11b_2+18\\
18n-7e&\geq 30b_8+10b_7+6b_6+9b_5+12b_4+15b_3+11b_2+18+{1\over 2}(n-n').
\end{align*}

If $b_8\geq 1$, then righthand side is at least $48$, as desired. 

So, let us assume that $b_8=0$. It follows that 
\begin{align*}
b&=\sum_{i=2}^7b_i\\
n&\leq \sum_{i=2}^7ib_i+(n-n')\\
18n-7e&\geq 30b_8+10b_7+6b_6+9b_5+12b_4+15b_3+11b_2+18+{1\over 2}(n-n')\\
&= 10b_7+6b_6+9b_5+12b_4+15b_3+11b_2+18+{1\over 2}(n-n')\\
&\geq 18+\sum_{i=2}^7{i\over 2}b_i+{1\over 2}(n-n')\\
&\geq 18+{1\over 2}n.
\end{align*}
This is larger or equal to $48$ if $n\geq 60$.

Finally, if $n=n'$ and $n\geq 2$, since the last two vertices we delete has at most one edge between them, we know $$18n-7e\geq 18\cdot 2-7\cdot 1+{1\over 2}(n-2)=29+{1\over 2}(n-2)$$which is larger or equal to $48$ when $n\geq 40$. Since $40<60$, we complete the proof of Theorem \ref{mt}.

{\bf Added in proof:} 
Before finishing our paper we just learned that Ruilin Shi, Zach Walsh, Xingxing Yu
determined the planar Tur\'an number of the 7-cycle too. (See
arXiv:2306.13594) That proof has some similarities and differences with our proof, so the proofs are independent.

\end{proof}
\bibliographystyle{abbrv}
\bibliography{c7}

\begin{thebibliography}{1}

\bibitem{cranston2021planar}
D.~W. Cranston, B.~Lidick{\`y}, X.~Liu, and A.~Shantanam.
\newblock Planar tu{\'a}n numbers of cycles: A counterexample.
\newblock {\em arXiv preprint arXiv:2110.02043}, 2021.

\bibitem{dowden2016extremal}
C.~Dowden.
\newblock Extremal c4-free/c5-free planar graphs.
\newblock {\em Journal of Graph Theory}, 83(3):213--230, 2016.

\bibitem{erdos1962number}
P.~Erdos.
\newblock On the number of complete subgraphs contained in certain graphs.
\newblock {\em Magyar Tud. Akad. Mat. Kutat{\'o} Int. K{\"o}zl}, 7(3):459--464,
  1962.

\bibitem{erdos1963structure}
P.~Erd{\"o}s.
\newblock On the structure of linear graphs.
\newblock {\em Israel Journal of Mathematics}, 1:156--160, 1963.

\bibitem{ghosh2022planar}
D.~Ghosh, E.~Gyori, R.~R. Martin, A.~Paulos, and C.~Xiao.
\newblock Planar tur{\'a}n number of the 6-cycle.
\newblock {\em SIAM Journal on Discrete Mathematics}, 36(3):2028--2050, 2022.

\bibitem{lan2019extremal}
Y.~Lan, Y.~Shi, and Z.-X. Song.
\newblock Extremal theta-free planar graphs.
\newblock {\em Discrete Mathematics}, 342(12):111610, 2019.

\bibitem{lan2022improved}
Y.~Lan and Z.-X. Song.
\newblock An improved lower bound for the planar tur{\'a}n number of cycles.
\newblock {\em arXiv preprint arXiv:2209.01312}, 2022.

\end{thebibliography}

\end{document}